\newtheorem{theorem}{{Theorem}}[section]
\newtheorem{proposition}[theorem]{{Proposition}}
\newtheorem{isom.ext}[theorem]{{Trivial isometric extension}}
\newtheorem{lemma}[theorem]{{Lemma}}
\newtheorem{remark}[theorem]{{Remark}}
\newtheorem{example}[theorem]{{Example}}
 \newtheorem{definition}{{Definition}}
\definecolor{greenbf}{rgb}{0, 0.7 ,0.3}
\def\R{\mathbb{R}}
\def\GL{{\sf{GL}}}
\begin{document}
\title{On Rigidity of Generalized Conformal Structures}
\author{Samir Bekkara}
\address{S. Bekkara, Department of Mathematics, UST-Oran, Algeria }
\email{samir.bekkara@gmail.com}
\author{ Abdelghani Zeghib}
\address{A. Zeghib, CNRS, UMPA, ENS-Lyon, France }
\email{abdelghani.zeghib@ens-lyon.fr \hfill\break\indent
 \url{http://www.umpa.ens-lyon.fr/~zeghib/}}
\date{\today}

\begin{abstract}
The classical Liouville Theorem on conformal transformations determines
local conformal transformations on the Euclidean space of dimension $\geq 3$%
. Its natural adaptation to the general framework of Riemannian structures
is the 2-rigidity of conformal transformations, that is such a
transformation is fully determined by its 2-jet at any point. We prove here
a similar rigidity for generalized conformal structures defined by giving a
one parameter family of metrics (instead of scalar multiples of a given one)
on each tangent space.
\end{abstract}

\maketitle
\tableofcontents

\section{Introduction}

\baselineskip 0.5 cm

\subsubsection*{Rough notion}

For a vector space $E$, let $\mathsf{Sym} (E)$ be the space of symmetric
bilinear forms on $E$, $\mathsf{Sym} ^+(E)$ those which are positive
definite, and $\mathsf{Sym} ^*(E)$ the non-degenerate ones.

For a manifold $M$, one defines similarly fiber bundles $\mathsf{Sym} (TM)$,
$\mathsf{Sym} ^+(TM)$ and $\mathsf{Sym} ^*(TM)$ associated to its tangent
bundle $TM$.

A Riemannian metric is nothing but a section of $\mathsf{Sym} ^+(TM)$.
Recall on the other hand that a (Riemannian) conformal structure consists in
giving a class $[g]$ of Riemannian metrics, for the conformal equivalence
relation $\sim {}$ between metrics: $g_1 \sim g_2$ if there exists a
function $\sigma$ on $M$ such that $g_1 = e^\sigma g_2$. Thus, a conformal
structure consists in giving a section of the projectivized of $\mathsf{Sym}
^+(TM)$.

Equivalently, a conformal structure consists in giving for each point $x \in
M$, a half line in $\mathsf{Sym} ^+(T_xM)$.

We are now going to introduce a first rough definition of
generalized conformal structures (GCS for short) by associating to
each $x \in M $ a (non-parameterized) curve in $\mathsf{Sym}
^+(T_xM)$. Say, this consists in giving a subset $\mathcal{C}
\subset \mathsf{Sym} ^+(TM)$ such that the fibers of the projection
$\mathcal{C} \to M$ have dimension $\leq 1$
and are non-empty. One naturally defines the image of such a structure $%
\mathcal{C}$ by a diffeomorphism, and an automorphism group $Aut(\mathcal{C}%
) $. In the sequel, automorphisms will be alternatively called isometries.

\bigskip

Our goal is to study such objects from the point of view of being ``rigid
geometric structures''. Roughly speaking, $d$-rigidity means that an
automorphism is fully determined by its jet up to order $d$ at any point. We
have here two ``limit'' cases, that where the $\mathcal{C}$-fibers are points
(a Riemannian metric), and the other where the $\mathcal{C}$-fibers are
half-lines (a conformal structure). It is known that Riemannian metrics are
1-rigid, whereas conformal structures are 2-rigid in dimension $\geq 3$;
this is the essence of classical Liouville Theorem. Our generalized case
here when the $\mathcal{C}$-fibers are general curves may be expected to be
as rigid as the conformal case, that is one has 2-rigidity. In some sense,
one naturally expects that when going from straight lines to general curves,
one can not lose of rigidity because one gets more constraints on isometries.

\subsubsection{A First example}

\label{first} Let us start by this general example which will give evidence
that some topological tameness hypotheses on $\mathcal{C}$ are in order. Let
$\phi^t$ be a flow on $M$ and $g_0$ any initial metric on $M$. For any $x $,
give $\mathcal{C}_x$ as the (parameterized) curve $t \to (\phi^t_*g_0)_x \in
\mathsf{Sym} ^+(T_xM)$, here $\phi^t_* g_0$ is the image of $g_0$ by $\phi^t$%
.

Observe that $\phi^t \in Aut(\mathcal{C})$. Thus, any flow gives rise to a
rough GCS with a non-trivial automorphism group, which may have a strong
dynamics. One can not expect for such a structure to behave as a nice
geometric structure!

\subsubsection{(Regular) Definition}

\label{definition} We are now going to propose a definition of GCS which
will be proved to be adapted to our rigidity hope, just by assuming that the
corresponding subset $\mathcal{C}$ is a manifold. \bigskip

\textit{More precisely, let us say $\mathcal{C}$ is a regular GCS if $%
\mathcal{C}$ is a submanifold of dimension $\dim M +1$ in $\mathsf{Sym}
^+(TM)$, which is transverse to the fibers (of $\mathsf{Sym} ^+(TM) \to M$).}
Equivalently, the projection $\mathcal{C} \to M$ is a submersion and $\dim
\mathcal{C} = \dim M + 1$.

\medskip
Each fiber $\mathcal{C}_x$ is thus a (non-necessarily connected)
embedded
1-dimensional submanifold.
In the case of a classical conformal structure, $\mathcal{C}$ is in fact a
closed submanifold and it fibers over $M$.

Let us say that $\mathcal{C}$ is \textbf{generic} if the tangent direction
of $\mathcal{C}_x$ at any of its points belongs to $\mathsf{Sym} ^*(T_xM)$.
In other words, if $\mathcal{C}_x$ is parameterized as a curve $t \in
\mathbb{R} \to c_x(t) \in \mathsf{Sym} (T_xM)$, then $c_x^\prime(t)$ is
assumed to be non-degenerate. For example, classical conformal structures
are generic.

\subsubsection{A second example, Infinitesimally Homogeneous case}

(see \ref{H.structure}). \label{second} Let us consider the situation where
there is a 1-dimensional submanifold $\mathcal{C}_0 \subset \mathsf{Sym} ^+(%
\mathbb{R}^n)$ such that for any $x$, $%
\mathcal{C}_x = A_x^*(\mathcal{C}_0)$ where $A_x: \mathbb{R}^n \to T_x M$
is a linear isomorphism and $A_x^*$ is the associated map $\mathsf{Sym} ^+(%
\mathbb{R}^n) \to \mathsf{Sym} ^+ (T_xM)$. If the dependence $x \to A_x$ is
smooth,  then $\mathcal{C}$ is a GCS, which as in the
standard conformal case, gives rise to a fibration $\mathcal{C} \to M$.

Let us here mention one useful and beautiful property of this moduli space $%
\mathsf{Sym} ^+(\mathbb{R}^n)$, or more generally any $\mathsf{Sym} ^+(E)$,
for $E$ a linear space; this is the space of ``linear'' Riemannian metrics
on $E$, and it admits itself a canonical Riemannian metric, which makes it
as a universal symmetric space under the natural action of $\mathsf{GL}(E)$
(see §\ref{metric.on.metrics}).

Let    $H$ be the  stabilizer subgroup in $\mathsf{GL}(n,
\mathbb{R})$ of $\mathcal{C}_0$.
For any $x \in M$, consider $I_x$ the set of isomorphisms $T_xM \to \mathbb{R%
}^n$ sending $\mathcal{C}_x$ to $\mathcal{C}_0$. This is clearly an $H$%
-orbit in the $\mathsf{GL}(n, \mathbb{R})$-space $\mathsf{{Isom}}(T_xM,
\mathbb{R}^n)$, that is the fiber over $x$ of the frame bundle $P_M \to M$.
When $x $ runs over $M$, we therefore get a section of $P_M /H \to M$, that
is an $H$-structure on $M$.

Conversely, an $H$-structure gives naturally a GCS of type $\mathcal{C}_0$.
Indeed, by definition of an $H$-structure, it consists in giving for any $x$%
, an $H$-orbit $I_x$ as above. The pull back $\mathcal{C}_x$ of the curve $%
\mathcal{C}_0$ by any element of $I_x$ does not depend on the choice of such
element.

\subsubsection{Rigidity}

\label{Rigidity}

Let $\phi$ be a diffeomorphism of $M$ and $\phi^*$ its induced action on $%
\mathsf{Sym} (TM)$. Then $\phi$ is an automorphism of $\mathcal{C}$ (a GCS
on $M$) if $\phi^*(\mathcal{C}) = \mathcal{C}$.

The following discussion applies to diffeomorphisms sending a point $p \in M$
to another $q \in M$, but we will be specially interested in the case $p = q$.
 Then, define $\phi$ to be isometric
up to order 1 at $p$, if $\phi(p) = p$ and $(\phi^*(\mathcal{C}))_p =
\mathcal{C}_p$, i.e. $\phi^*(\mathcal{C})$ and $\mathcal{C}$ meet along $%
\mathcal{C}_p$. We say that $\phi$ is isometric up to order $d \geq
1$ at $p $ or simply a $d$-isometry if $\phi^*(\mathcal{C})$ and
$\mathcal{C}$ have contact of order $(d-1)$ along $\mathcal{C}_p$.
In order to be complete, let us precise that the local model of two
$k$-submanifolds $V$ and $W$ of $\mathbb{R}^N$ having a contact at
order $s$ along a curve $\mathcal C_0$, is that where $V =
\mathbb{R}^k$,  $ \mathcal C_0 = \R \subset \R^k$, and $W$ is the
graph of a function $f: \mathbb{R}^k \to \mathbb{R}^{N-k}$ having a
vanishing Taylor expansion up to order $s$ at all points of
$\mathcal C_0$. Let us also indicate that we will say that $\phi$
has a trivial $d$-jet at $p$ if it has the same $d$-jet as the
identity at $p$.

Rigidity at order 2 of classical conformal structures in dimension $\geq 3$,
is essentially equivalent to the classical Liouville Theorem stating that
any local conformal transformation of a Euclidean space of dimension $%
\geq 3$, is a composition of a translation, a similarity and an inversion
(see for instance \cite{berger, Spivak} and \cite{Frances}). There are many
approaches to this rigidity, including that by the theory of $H$-structures
of finite type, via computation of the prolongation spaces for the conformal
group $H=\mathbb{R}.\mathsf{O}(n)$, see \cite{Kobaya, sternberg, gromov 1,
Ballmann}. Here we generalize to generic GCS:

\begin{theorem}[Generalized Liouville Theorem]
\label{Liouville} Let $\mathcal{C}$ be a generic generalized conformal
structure on a manifold of dimension $\geq 3$.
 Then $\mathcal{C}$ is $d$-rigid, for any $d \geq 2$,  that is a $(d+1)$-isometry at a given point with a
trivial $d$-jet, has a trivial $(d+1)$-jet.
\end{theorem}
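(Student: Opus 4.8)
The plan is to reduce the statement to a purely algebraic prolongation computation at the point $p$, and then to establish the vanishing of the relevant prolongation space using genericity and $n=\dim M\ge 3$.

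\emph{Reduction to jets.} Work in a local trivialization of $TM$ near $p=0$ and parametrize $\CC$ by a map $(x,t)\mapsto\bigl(x,c(x,t)\bigr)$ valued in $M\times\Sym^+(\R^n)$, so that $c_0(t):=c(0,t)$ parametrizes the fiber $\CC_p$ and, by genericity, $c_0'(t):=\partial_t c(0,t)\in\Sym^*(\R^n)$ is nondegenerate. Since $\phi$ has trivial $2$-jet, write $\phi(x)=x+P_3(x)+O(|x|^4)$ with $P_3$ the homogeneous cubic part and $P_3(x)=\tfrac16 C(x,x,x)$ for a symmetric trilinear $C\colon(\R^n)^3\to\R^n$; the $3$-jet of $\phi$ is exactly $C$, and ``trivial $3$-jet'' means $C=0$. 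For $A\in\mathfrak{gl}(\R^n)$ and $g\in\Sym(\R^n)$ let $A\cdot g:=g(A\,\cdot\,,\cdot)+g(\cdot,A\,\cdot\,)=\tfrac{d}{ds}\bigl|_0(\mathrm{Id}+sA)^{*}g$. On symmetric forms the pullback reads $g\mapsto g+B(x)\cdot g+O(|x|^4)$ with $B(x):=DP_3(x)=\tfrac12 C(x,x,\cdot)$ homogeneous of degree $2$; since the displacement $\phi(x)-x=P_3(x)$ only affects $c(\phi(x),s)$ at order $|x|^3$, expanding $\phi^{*}(\CC)$ and absorbing a reparametrization $s=t+a_2(x,t)+\cdots$ reduces the contact-of-order-$2$ condition at leading (quadratic) order to
\[
B(x)\cdot c_0(t)=-\,a_2(x,t)\,c_0'(t),\qquad\text{for all }x,t,
\]
so that the only constraint imposed by $\phi$ being a $3$-isometry with trivial $2$-jet is $B(x)\cdot c_0(t)\in\R\,c_0'(t)$ for all $x,t$.

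\emph{Reformulation as a prolongation.} Let $\h:=\{A\in\mathfrak{gl}(\R^n):A\cdot c_0(t)\in\R\,c_0'(t)\ \forall t\}$ be the Lie algebra of the stabilizer of the curve $\CC_p$ in $\GL(n,\R)$. The constraint says $B(x)\in\h$ for all $x$; polarizing the quadratic map $B(x)=\tfrac12 C(x,x,\cdot)$ and using that $\h$ is a linear subspace gives $C(x,y,\cdot)\in\h$ for all $x,y$. By symmetry of $C$, fixing any two of its three arguments lands in $\h$, which is precisely the statement that $C$ lies in the second prolongation $\h^{(2)}$. Hence Theorem~\ref{Liouville} is equivalent to the vanishing $\h^{(2)}=0$ (for $n\ge3$) of the prolongation of the stabilizer algebra of an arbitrary generic curve. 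Note that only the fiber $\CC_p$ enters: the variation of $\CC_x$ in $x$ contributes to $\phi^{*}(\CC)$ only at order $|x|^3$.

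\emph{Vanishing of the prolongation.} Fix $t_0$ and use $\GL(n,\R)$ to normalize $c_0(t_0)=\mathrm{Id}$; set $S:=c_0'(t_0)$, nondegenerate by genericity, and diagonalize it by the residual $\O(n)$. The order-$0$ part of $C(x,y,\cdot)\in\h$ reads $C(x,y,\cdot)+C(x,y,\cdot)^{\top}\in\R\,S$; lowering the last index with $\mathrm{Id}$ to a tensor $\widehat C$ symmetric in its first three slots, this becomes
\[
\widehat C(x,y,z,w)+\widehat C(x,y,w,z)=\lambda(x,y)\,S(z,w),
\]
a weighted analogue of the classical conformal prolongation relation (the case $S=\mathrm{Id}$). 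The plan is to solve it by the usual device: play the symmetry of $\widehat C$ in $(x,z)$ against the distinguished slot $w$ and contract with $S^{-1}$ to show first that $\lambda$ is proportional to $S$ and then that $\widehat C=0$ once $n\ge3$. Should the order-$0$ relation alone leave nonzero solutions for some configuration of $S$, I would adjoin the order-$1$ condition $A^{\top}S+SA=\lambda_0\,c_0''(t_0)+\lambda_1 S$, obtained by differentiating $A\cdot c_0(t)\in\R\,c_0'(t)$ in $t$, which pins the skew part $W$ of each $C(x,y,\cdot)$ via a bracket relation $[S,W]=\lambda_0\bigl(c_0''(t_0)-S^2\bigr)+\lambda_1 S$, and, if needed, let $t_0$ vary along the curve; these extra constraints only shrink $\h$ and can only help.

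\emph{Main obstacle.} The crux is exactly this last step. For $S=\mathrm{Id}$ the vanishing $\mathfrak{co}(n)^{(2)}=0$ is the algebraic core of Liouville's theorem; here $\h$ is the stabilizer of a \emph{general} generic curve, typically a non-reductive Lie algebra with no symmetric-space model available, so the vanishing must be proved by a direct multilinear computation that genuinely uses nondegeneracy of $c_0'$ and the bound $n\ge3$ (for $n=2$ it fails, already for conformal structures). A secondary, purely bookkeeping, difficulty is to verify that ``contact of order $2$'' contributes nothing beyond the quadratic constraint isolated above and that the reparametrization $s=\psi(x,t)$ can always absorb the tangential term $a_2(x,t)\,c_0'(t)$.
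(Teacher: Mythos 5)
Your reduction is sound, and it is in fact the same reduction the paper performs: triviality of the $2$-jet forces the reparametrization to satisfy $k(p,t)=t$ and $\mathbf{D}k_{(p,r)}=0$, and then the contact-of-order-$2$ condition at quadratic order in $x$ becomes, after polarization, exactly
\begin{equation*}
J\bigl( A(U,V,W),W^\prime\bigr) + J\bigl( A(U,V,W^\prime),W\bigr) \;=\; K(U,V)\, J^\prime(W,W^\prime),
\end{equation*}
with $A=\phi^{\prime\prime\prime}_p$ symmetric trilinear, $J=J_{(p,r)}$ positive definite, $J^\prime$ nondegenerate by genericity, and $K$ the Hessian of the reparametrization; this is your relation $\widehat C(x,y,z,w)+\widehat C(x,y,w,z)=\lambda(x,y)S(z,w)$. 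Your bookkeeping claims (that the $x$-variation of the fibers only enters at cubic order, and that $a_1=0$) are correct and are verified explicitly in the paper's computation, where the $J^{2,0}$ terms cancel on the two sides.

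The genuine gap is that you never prove the vanishing statement to which you have reduced the theorem: you outline a ``plan,'' hedge that the order-$0$ relation might admit nonzero solutions for some $S$, propose adjoining further conditions if needed, and then in your closing paragraph explicitly concede that this vanishing is the unresolved crux. That vanishing is precisely the paper's key result (Proposition \ref{GeneralizedBraid}, the Generalized Braid Lemma), and it is where essentially all the work of the proof lies. The paper establishes it with no auxiliary conditions, as follows: (i) using the full symmetry of $A$ in its three arguments, substitute equation (\ref{tresse}) into itself to obtain the exchange identity $K(U,V)J^{\prime}(W,W^{\prime})+K(W,W^{\prime})J^{\prime}(U,V)=K(U,W)J^{\prime}(V,W^{\prime})+K(V,W^{\prime})J^{\prime}(U,W)$; (ii) for $J^\prime$-orthogonal $W_1,W_2$, pick $W_3$ with $J^\prime(W_3,W_3)\neq 0$ orthogonal to both (this is where $\dim\geq 3$ and nondegeneracy of $J^\prime$ enter) to conclude $K(W_1,W_2)=0$, hence, writing $K(U,V)=J^\prime(U,P V)$, every $J^\prime$-anisotropic vector is an eigenvector of $P$, so $P$ is a homothety and $K=\alpha J^\prime$; (iii) plugging back with $U=V$, $W=W^\prime$, $J^\prime(U,W)=0$ forces $\alpha=0$; (iv) the relation then collapses to the hypothesis of the classical Braid Lemma, giving $A=0$. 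In particular your fallback options (the order-$1$ condition in $t$, letting $t_0$ vary) are unnecessary: the order-$0$ relation at a single parameter value, with $J$ and $J^\prime$ nondegenerate, already kills $A$. As it stands, your proposal correctly isolates the crucial algebraic lemma but does not prove it, and therefore does not prove the theorem.
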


An alternative formulation would be that if a $(d+1)$-isometry at some point $p$, with $d \geq 2$,  has a trivial 2-jet   then it has a
trivial $(d+1)$-jet. In particular, if a smooth local isometry has a trivial 2-jet at
$p$, then it has  trivial infinite jet, i.e. it is infinitely tangent to  the identity at $p$.

In a first version of the present   article we just proved 2-rigidity,  we then investigate
 the general case   after  request of the referee.
  In fact, in the case of geometric structures in the Gromov  sense, it is a general fact that
 $k$-rigidity implies $d$-rigidity for any $d \geq k$, and that a local isometry
 with a trivial $k$-jet at some point is the identity in a neighbourhood of it.  These
 implications are somehow ``tautological'' but follow from a highly
 sophisticated machinery.
 Adaptation of this formalism to our situation seems
 possible but needs a specific and independent  investigation,
 see  \S \ref{geometric_structure?} and \S \ref{A-type?} for a preliminary  discussion
 on these aspects.
 Actually, our proof of
 $d$-rigidity of GCS for $d >2$ is done  by  rather adapting
  computations of the case $d=2$.

One essential motivation behind the Gromov
notion of $k$-rigidity for a geometric structure is that it gives a
way to prove that its  isometry group is of Lie type.
We will  give details
 about this question of  Lie group structure   of
isometry groups of GCS (as well as  lightlike metrics) in a forthcoming
article \cite{BZ_Next}.

\begin{example}[A non rigid example]
Consider canonical coordinates $(x^1, \ldots, x^n)$ on $\mathbb{R}^n$. Endow
it with $\mathcal{C}$ the ``constant''  GCS given by the curve of Euclidean
metrics $t(dx^1)^2 + (dx^2)^2 + \ldots + (dx^n)^2 $, $t>0$. This $\mathcal{C%
}$ is in fact an $H$-structure. Any diffeomorphism $\phi$ of the form $%
\phi(x^1, \ldots, x^n) = (f(x^1), x^2, \ldots, x^n)$ is isometric. This
structure is not rigid, indeed $\mathcal{C}$ is not generic.

Note however that it may happen for a GCS to be rigid, even if it is not
generic (such a situation is thus not covered by our result). For instance, for
an $H$-structure with $H$ a one parameter subgroup of $\mathsf{GL}(n,
\mathbb{R})$, one can prove it  has finite type iff the Lie subalgebra of $H$ contains no
matrices of rank 1, in which case the structure has finite type 1, i.e. it
is 1-rigid like a Riemannian metric (see  for instance  (\cite{Kobaya}, page 4) for one implication).
\end{example}

\begin{remark}
More generalizations of conformal structures can be obtained by relaxing the
dimension condition on $\mathcal{C}$, say by assuming $\dim \mathcal{C} =
\dim M +l$, where $l$ may be bigger than 1. The rigidity discussion will
then depend on $l$ and $\dim M$?
\end{remark}

\section{Further investigations}

\subsection{Interplay with Lightlike metrics}

\label{transversalyRiemaninnian} Our motivation behind the study of GC
structures was in fact their relation with the lightlike ones that we
considered in \cite{BZ}. Recall that a \textbf{lightlike} metric $g$ on a
manifold $\mathcal{V}$ is a tensor which is a positive non-definite
quadratic form of 1-dimensional kernel in each tangent space of $\mathcal{V}$
\cite{BFZ}. The kernel of $g$ is a direction field $N$, tangent to a
1-dimensional foliation $\mathcal{N}$ called null or characteristic.

This null foliation is not necessarily oriented by a (global) non-singular
vector field $X$ tangent to it, but we can assume it is the case by passing
to a double cover, or arguing locally. Then, the lightlike structure is said
to be \textbf{transversally Riemannian} if the Lie derivative $L_Xg = 0$.
Let us say $g$ is \textbf{nowhere} transversally Riemannian, if $L_Xg(x)
\neq 0$, for any $x$. In the stronger situation where $L_Xg$ is
non-degenerate on $T\mathcal{V}/N$, $g$ is said to be \textbf{generic}. Both
this genericity condition or being transversally Riemannian are independent
of the choice of a particular $X$ orienting $\mathcal{N}$.

\subsubsection{From GCS to lightlike structures}

Let $\mathcal{C} \subset \mathsf{Sym} ^+(TM)$ be a GCS on $M$ and $\pi:
\mathcal{C} \to M$ the projection. Let $x \in M$, $q \in \mathcal{C}_x =
\pi^{-1}(x)$, and consider the projection $d_q \pi: T_q \mathcal{C} \to T_x
M $. Now, let $q$ play the role of a (definite) scalar product on $T_xM$,
its pull back by $d_q\pi$ is a lightlike scalar product on $T_q \mathcal{C}$%
. We get in this way a tautological lightlike metric on $\mathcal{C}$.

Observe that this lightlike metric on $\mathcal{C}$ is nowhere transversally
Riemannian, and also that $\mathcal{C} $ is generic as a GCS iff its
lightlike metric is generic (as defined previously). To see all this, one
writes all things in a local chart. If $x= (x^1, \ldots, x^n) $ are local
coordinates on $M$, then $\mathcal{C}$ admits a parameterization $(t, x) \to
c(t, x) \in \mathsf{Sym} ^+(T\mathbb{R}^n) $
(one can take $c$ of the form $c(t, x) = (d(t, x), x) \in \mathsf{Sym} ^+(\mathbb{R}^n)
\times \mathbb{R}^n$). The lightlike metric is defined
by $g( \frac{\partial c} {\partial x^i}, \frac{\partial c} {\partial x^j}) =
c(t, x) ( \frac{\partial } {\partial x^i}, \frac{\partial } {\partial x^j})$ (this last
expression just means application of the scalar product $d(t, x)$
to $(\frac{\partial } {\partial x^i}, \frac{\partial } {\partial x^j})$).
If one takes $X = \frac{\partial } {\partial t}$ as a vector field tangent
to the null direction, then $L_X g ( \frac{\partial c} {\partial x^i}, \frac{%
\partial c} {\partial x^j})= \frac{\partial c} {\partial t} (\frac{\partial
} {\partial x^i}, \frac{\partial } {\partial x^j})$.  Now,  for a given
$x$,  $t \to c(t, x)$ is a parameterization of
$\mathcal C_x$ which is by our   definition of a regular GCS,
an embedded 1-dimensional manifold. Hence
  $\frac{\partial c%
} {\partial t}$  (seen as  element of $\mathsf{Sym} (\mathbb{R}^n)$)
does not vanish  which  shows that
the associated lightlike structure is always
 nowhere transversally
Riemannian. The lightlike metric $g$ is generic iff $L_Xg$ is non-degenerate on the space
generated by the
$\frac{\partial c} {\partial x^i}$'s. This is equivalent to that $\frac{\partial c%
} {\partial t}$ is non-degenerate, that is $\mathcal C$ is generic.


 \subsubsection{From   lightlike  to GCS structures}

We will  introduce a notion of simple  lightlike manifold ensuring that it comes from a GCS.

\begin{definition} A lightlike manifold $(\mathcal{V},
g)$ is said to be  \textbf{simple} if

a) There is a Hausdorff manifold
$M$ and a submersion $\pi: \mathcal{V} \to M$, such that the
connected components of its levels are the leaves of the null foliation
$\mathcal{N}$.

 b)   $\mathcal{C}$ is a regular GCS on $M$, where for $x \in M$,
 $\mathcal C_x$ is the set of all scalar products obtained from
  the projections $T_y\mathcal V \to T_xM$, where $y \in \mathcal V$
 is such that $x = \pi(y)$.

 \end{definition}

It is not so easy to formulate directly condition (b)  by means of $(\mathcal{V}%
, g)$ only (without refereeing to $M$), but the condition implies in particular that $(\mathcal{V}, g)$ is
nowhere transversally Riemannian.
Conversely, and this is the point, a nowhere transversally Riemannian
lightlike manifold is locally simple: any point admits a simple
neighborhood.

\medskip

Summarizing: there is a one to one correspondence between GCS structures and
simple lightlike ones, the generic in one hand correspond to the generic in
the other, and locally any nowhere transversally Riemannian lightlike metric
gives rise to a GCS.

\begin{example}
For the classical conformal sphere $\mathbb{S}^n$, the associated
lightlike manifold $\mathcal{V}$ is the  Minkowski lightcone
$$\mathsf{Co}^{n+1}= \{x = (x^1, \ldots, x^{n+2}) \in \R^{n+2} / q(x)= 0, x^{n+2}>0 \}$$
  seen as a lightlike submanifold in the Minkowski space $(\mathbb{R}^{n+2}, q)$,
 where $q(x)= (x^1)^2 + \ldots +(x^{n+1})^2 -  (x^{n+2})^2  $.

\end{example}

\subsubsection{Sub-rigidity}

A lightlike structure is an $H$-structure for $H$ the orthogonal group of
the standard lightlike scalar product $(x^1)^2 + \ldots + (x^{n-1})^2$ on $%
\mathbb{R}^n$. This structure has infinite type in Cartan's
terminology, equivalently it is not rigid in Gromov sense. We
discussed in \cite{BZ} subrigidity, a weaker property, that may be
satisfied by lightlike metrics. \textit{For $i <d$, a geometric
structure is $(d,i)$ \textbf{subrigid}, if any $d$-isometry which
has a trivial $i$-jet at some point has in fact a trivial
$(i+1)$-jet at that point}. In particular, $(d+1, d)$ subrigidity
coincides with usual $d$-rigidity.

\subsubsection{Isometry groups}

Let us call a \textit{transvection} of $(\mathcal{V}, g)$ any map $\mathcal{V%
} \to \mathcal{V}$ sending each leaf of $\mathcal{N}$ to itself. A
transvection is not necessarily isometric. In fact, any point admits in its
neighborhood a non-singular vector field generating (local) transvections,
iff $(\mathcal{V}, g)$ is transversally Riemannian.

If $(\mathcal{V}, g)$ is simple, then we have a group morphism $\mathsf{Iso}
(\mathcal{V}, g) \to \mathsf{Iso} (M, \mathcal{C})$. Its kernel is $\mathsf{%
Iso} ^{Tr}(\mathcal{V}, g)$, the group of isometric transvections. In the
simple case, $\mathsf{Iso} ^{Tr}(\mathcal{V}, g)$ does not contain one
parameter groups, but we can not conclude it is discrete, for instance
because one does not know if $\mathsf{Iso} (\mathcal{V}, g)$ is a Lie group.

Now, comparison between infinitesimal isometry groups of $(\mathcal{V}, g)$
and $(M, \mathcal{C})$ is even more complicated. We can however, as stated
in \cite{BZ}, relate subrigidity of $(\mathcal{V}, g)$ to the rigidity of $%
(M, \mathcal{C})$. Our second main result in the present article
will be to provide a proof of $(d+2, d) $ subrigidity of lightlike
metrics based on Liouville Theorem for GCS:

\begin{theorem}
\label{subrigid} In dimension $\geq 4$, a generic lightlike metric
is $(d+2, d) $ subrigid for $d \geq 1$, that is a (d+2)-isometry at
a given point with a trivial $d$-jet has a trivial $(d+1)$-jet. In
particular, an isometry with a trivial 1-jet at some point has a
trivial infinite jet.
\end{theorem}

The proof will be given in \S \ref{subrigidity_lightlike}. The general case
is no more difficult than that of $d=1$, that is $(3, 1)$-subrigidity.  We will start
 giving a detailed proof  in this last case and  show afterwards adaptations to the higher order  case
$d>1$.

\subsection{Remarks on other aspects}

Many other natural questions can be asked about both local and global
properties of GCS. For instance, one may try to weaken the genericity
condition in Theorems \ref{Liouville} and \ref{subrigid}, and also study
global properties of isometric actions preserving GCS from the point of view
of a global rigidity, say by asking a conjecture of Lichnerowicz type (see
\cite{gromov 1, gromov2, frances2}). We will here briefly discuss the
following other aspects:

\subsubsection{Pseudo-Riemannian case}

If one replaces $\mathsf{Sym} ^+$ by $\mathsf{Sym} ^*$, that is the space of
non-degenerate quadratic forms (i.e. scalar pseudo-products) then one gets
pseudo-Riemannian GCS that are defined similarly by giving a curve in each
$\mathsf{Sym} ^*(T_xM)$, for $x \in M$.  Theorem \ref{Liouville} seems to extend to
  this wider
framework. Indeed, all algebraic and local computations in Sections
\ref{Section.Braid} and \ref{Section.Liouville} apply in this situation, since they do
not assume positiveness but rather
non-degeneracy of metrics. However,  for the proof of  Theorem  \ref{Liouville},
positiveness is required  in particular to treat
the periodic case
\ref{subsection.periodic}.

\subsubsection{Anosov flows}

Let us give hints that Anosov flows always preserve GCS (of Riemannian
type), although they never preserve classical Riemannian conformal
structures (see for instance \cite{Has-Kat} for basic notions). Indeed, this
will be a particular case of the general construction of \ref{first}. The
point is that, one can choose the initial Riemannian metric $g_0$ so that
the corresponding family ${\phi^t_*}g_0$ defines a regular GCS.
Essentially, for any $x$, $t \to (\phi^t_*g_0) (x) \in \mathsf{Sym} ^+(T_xM)
$ is a properly embedded curve $\mathcal{C}_x$, and thus $\mathcal{C}=
\cup_x \mathcal{C}_x$ is a submanifold in $\mathsf{Sym} ^+(TM)$. To ensure
this, one has to start with an adapted $g_0$, that is, it is contracted on
the stable bundle, and expanded on the unstable one.

Regarding genericity, let us make the following technical assumption (which
it seems that one can overcome). Denote by $X$ the generating vector field
of $\phi^t$. Then assume that $\phi^t$ preserves a smooth supplementary
sub-bundle $E \subset TM$, i.e. $TM = \mathbb{R} X \oplus E$ (such an $E$
must be the sum of the stable and unstable bundles). Say $E$ is defined by a
1 differential form $\eta$. Assume $g_0(X, X) = 1$, and consider now the GCS
defined by $\phi^t_*g_0 + f(t) \eta \otimes \eta$, with $f(t)$ and $\frac{%
\partial f}{\partial t}$ positive for any $t$. This GCS is generic.

\subsubsection{A Geometric structure?}

\label{geometric_structure?}

 In general, GCS are neither $H$-structures
in Cartan sense nor geometric structures in the Gromov sense
(see \cite{gromov 1, gromov2, Ballmann, candel 2})! We already saw
that a GCS $\mathcal{C}$ is an $H$-structure iff it is
infinitesimally homogeneous: all the curves $\mathcal{C}_x \subset
\mathsf{Sym} ^+(T_xM)$ are linearly equivalent to a same curve $\mathcal{C}%
_0 \subset \mathsf{Sym} ^+(\mathbb{R}^n)$, when $x \in M$ (§\ref{second}).

Now, more generally, one may ask in which situations $\mathcal C$  can be   naturally seen
as a geometric structure in the
Gromov sense?  We will not investigate this question in the present article since
it hides many technical difficulties. Let us just say that roughly speaking,
and at a formal level, one considers
 $\mathcal X$,
the space of non-parameterized  curves $\R \to \mathsf{Sym}^+(\R^n)$ of a given regularity
 $C^k$, that
is the quotient space of $C^k(\R, \mathsf{Sym}^+(\R^n)$
by the $\mathsf{Diff}^k(\R)$ -right composition action.  Let $\mathcal X^*$ be the subspace of those curves
whose image is an embedded 1-submanifold in $\mathsf{Sym}^+(\R^n)$.  The group $\GL(n, \R)$ acts
on both $\mathcal X$ and $\mathcal X^*$.
Let us restrict ourselves to the case of structures  $\mathcal C \to M$
that are  trivial topological fibrations with fiber
$\R$ and where  $M$ is an open subset of $\R^n$.   Such a $\mathcal C$
 is equivalent to giving
a map $\sigma: M \to \mathcal X^*$.
Roughly, one may think of
$\mathcal C$ as  a geometric structure in the Gromov sense, if
 the image of
$\sigma$ is  contained in a $\GL(n, \R)$-invariant subset $\Sigma \subset
\mathcal X^*$, which is a finite dimensional manifold.
It is not clear how to formulate a general statement about a situation where such a $\Sigma$
exists. Let us however
notice the following  simple example. Consider $d$ an integer, and let
$\Sigma^\prime$  be the set of elements of $\mathcal X$ given by
polynomial maps $\R \to \mathsf{Sym}(\R^n)$ of degree $\leq d$, and
 take  $\Sigma = \Sigma^\prime \cap \mathcal X^*$.

\subsubsection{A-type?} \label{A-type?}
Observe now that
in order to get a geometric structure of algebraic type (A-type), as defined
in \cite{gromov 1} (see also \cite{gromov2, Ballmann}), one needs $\Sigma$
to be an algebraic manifold and the $\mathsf{GL}(n, \mathbb{R})$-action on
it algebraic (see \cite{Ballmann}).

But, rigid geometric structures of algebraic type satisfy the Gromov's open
dense orbit Theorem, that is if the isometry pseudo-group of the structure
has a dense orbit, then this one is open! In other words an open dense
subset is locally homogeneous (see \cite{gromov 1, gromov2, benoist, zeghib}%
). However, one can see in the previous Anosov case that there are examples
where such a local homogeneous subset can not exist.
 We then conclude that
there is no  way to see such a  GCS as a geometric structure of algebraic type!

\section{Some preliminaries}

\subsection{Case of $H$-structures}

\label{H.structure} Let $H \subset \mathsf{GL}(n, \mathbb{R})$ be a closed
subgroup and ${\mathfrak{h}} \subset \mathsf{End}(\mathbb{R}^n)$ its Lie
algebra. Recall that the space ${\mathfrak{h}}_d$ of $d$-prolongations is
that of symmetric $(d+1)$-multi-linear maps $A: \mathbb{R}^n \times \ldots
\times \mathbb{R}^n \to \mathbb{R}^n$, such that for any given $(u_1,
\ldots, u_d)$, the endomorphism $u \to A(u, u_1, \ldots, u_d)$ belongs to ${%
\mathfrak{h}}$.  If  for some $d \geq 1$, ${\mathfrak{h}}_d =0$, one says that $H$
has finite type, with order the smallest such $d$.

\subsubsection{Algebraic structure}

\begin{lemma}
Let $\mathcal{C}_0$ be a connected curve in $\mathsf{Sym} ^+(\mathbb{R}^n)$
and $H$ the connected component of its stabilizer in $\mathsf{GL}(n, \mathbb{%
R})$. Then $H$ is semi-direct product $P \ltimes K$, where $K$ is compact
and acts trivially on $\mathcal{C}_0$, and $P$ is either trivial or a one
parameter group acting transitively on $\mathcal{C}_0$.
\end{lemma}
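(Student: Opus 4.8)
The plan is to exploit the symmetric-space geometry of $\Sym^+(\R^n)$ recalled in \S\ref{metric.on.metrics}. Equip $\Sym^+(\R^n) \cong \GL(n,\R)/\O(n)$ with its canonical $\GL(n,\R)$-invariant metric, under which it is a Hadamard manifold (complete, simply connected, of nonpositive curvature) and $\GL(n,\R)$ acts by isometries with point-stabilizers conjugate to the compact group $\O(n)$. Since $H$ preserves $\CC_0$ and acts by isometries of the ambient metric, it preserves the induced Riemannian metric on the connected $1$-manifold $\CC_0$; thus the action defines a homomorphism $\rho: H \to \mathrm{Isom}(\CC_0)$. I would set $K := \ker \rho$, the subgroup fixing $\CC_0$ pointwise, which is normal in $H$ and acts trivially on $\CC_0$ by construction.

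First I would check that $K$ is compact. Indeed $K$ fixes every point of $\CC_0$, hence is contained in the isotropy group of any single $q \in \CC_0$, which is a conjugate of $\O(n)$; being a closed subgroup of a compact group, $K$ is compact. Next I would identify the image $\rho(H) = H/K$. A connected $1$-dimensional Riemannian manifold is isometric to a line, a circle, or an (incomplete) interval, so the identity component of $\mathrm{Isom}(\CC_0)$ is respectively $\R$, $S^1$, or trivial; in particular it is at most $1$-dimensional. As $H$ is connected, $\rho(H)$ is a connected subgroup of this identity component, hence either trivial or all of it. In the first case $H = K$ is compact and we take $P$ trivial. Otherwise $\rho(H)$ is $\R$ or $S^1$ and acts transitively on $\CC_0$ (translations on the line, rotations on the circle).

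To produce the complement $P$ I would split the extension $1 \to K \to H \to \rho(H) \to 1$. At the Lie-algebra level one has $0 \to \k \to \h \to \h/\k \to 0$ with $\dim(\h/\k) = 1$; choosing any $X \in \h \setminus \k$ gives a complementary line $\p = \R X$, which is automatically a subalgebra. Set $P := \exp(\p)$, a one-parameter group with $\p \oplus \k = \h$, so that $H = P\,K$ and $\p$ maps isomorphically onto $\h/\k$, whence $\rho|_P$ is transitive. When $\CC_0$ is a line, the projection $P \to \rho(H) \cong \R$ is a proper bijection, so $P$ is a closed one-parameter subgroup isomorphic to $\R$ and $P \cap K$ is trivial. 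When $\CC_0$ is a circle, it is a compact, hence bounded, $H$-invariant subset of the Hadamard manifold $\Sym^+(\R^n)$; its circumcenter is then a point fixed by all of $H$, so $H$ itself lies in a conjugate of $\O(n)$ and is compact. In that case I would use maximal-torus theory to choose $P$ to be a closed circle subgroup mapping isomorphically onto $\rho(H) \cong S^1$. In all cases $H = P \rtimes K$ with $K$ compact acting trivially on $\CC_0$ and $P$ trivial or a one-parameter group acting transitively, as claimed.

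The main obstacle is the circle case: one must rule out a genuinely $1$-dimensional but non-closed image and guarantee that $P$ can be realized as an honest closed one-parameter subgroup. The Hadamard circumcenter argument, forcing $H$ to be compact whenever $\CC_0$ is compact, is the key geometric input that makes this possible; the Lie-algebra splitting and the classification of one-dimensional isometry groups are then routine.
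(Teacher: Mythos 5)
Your argument in the non-compact case is correct and is essentially the paper's own proof: induce the canonical metric on $\CC_0$, form $\rho: H \to \Iso(\CC_0)$, observe that $K=\ker\rho$ is compact because it lies in the isotropy group $\O(b)$ of any $b\in\CC_0$, use connectedness of $H$ to force $\rho(H)$ to be trivial or the full translation group, and split off $P=\exp(\R X)$ for $X\in\h\setminus\k$, which projects bijectively. In fact the paper treats \emph{only} this case: its proof declares $\CC_0$ isometric to an interval of $\R$ and never mentions circles. So your reduction of the compact case is a genuine addition, and the fixed-point step itself is fine: $H$ preserves a compact subset of the Hadamard manifold $\Sym^+(\R^n)$, hence fixes its circumcenter and is conjugate into $\O(n)$.

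The gap is in the last step of your circle case: one cannot in general ``choose $P$ to be a closed circle subgroup mapping isomorphically onto $\rho(H)\cong S^1$'', and with your $K=\ker\rho$ the decomposition $H=P\rtimes K$ can fail outright, because the extension $1\to\ker\rho\to H\to S^1\to 1$ need not split. Concretely, for $n=2$ let $\CC_0$ be the $\SO(2)$-orbit of the form $x_1^2+2x_2^2$, i.e.\ the circle of scalar products with eigenvalues $\{1,2\}$. The circumcenter is a round form $\mu(x_1^2+x_2^2)$, so the stabilizer of $\CC_0$ is $\O(2)$ and $H=\SO(2)$; here $\rho$ sends the rotation by $\theta$ to the rotation by $2\theta$, a double cover with $\ker\rho=\{\pm I\}$. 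The only circle subgroup of $H$ is $H$ itself, which maps two-to-one onto $\rho(H)$, not isomorphically; and $\{\pm I\}$ admits no complement, since $S^1$, being divisible, has no subgroup of index two. The lemma itself survives (take $P=\SO(2)$ and $K=\{e\}$), and your proof is repairable: set $K:=(\ker\rho)^0$ rather than the full kernel; then $H/K$ is a circle, and since $K$ is connected, $\pi_1(T)\to\pi_1(H)\to\pi_1(H/K)$ is surjective for a maximal torus $T$, so a suitable cocharacter of $T$ yields a circle subgroup $P$ mapping isomorphically onto $H/K$. This gives $H=P\rtimes K$ with $K$ compact acting trivially and $\rho(P)=\rho(H)$ transitive, but with $\rho|_P$ only a finite cover of $\rho(H)$. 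In short: demand that $P$ \emph{cover} $\rho(H)$, not that it be isomorphic to it, and shrink $K$ to the identity component of the kernel.
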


\begin{proof}
$\mathcal{C}_0$ inherits from $\mathsf{Sym} ^+(\mathbb{R}^n)$ a Riemannian
metric (see \ref{metric.on.metrics}), and so by taking its parametrization by
arc length, it becomes isometric to an open interval of $\mathbb{R}$. In the
case where $\mathcal{C}_0$ is a proper interval, its length is finite and
hence it has limit endpoints, which are fixed by $H$, and thus $H$ is
compact in this case. Let us now consider the case where $\mathcal{C}_0$ is
isometric to $\mathbb{R}$.

We have a representation $\rho: H \to \mathsf{Iso} (\mathbb{R})$. The kernel
$K$ of $\rho$ is compact since it is a closed subgroup in the orthogonal
group $O(b)$, for any $b \in \mathcal{C}_0$.

Since $H$ is connected, $\rho(H)$ is either trivial or coincides with the
translation group of $\mathbb{R}$. It then follows that if $H$ is not
compact, then $H/K \sim \mathbb{R}$. In this case, let $P$ be any one
parameter group that projects onto $\mathbb{R}$ (to see it exists take the
one parameter group generated by any vector not in the Lie subalgebra of $K$%
). Thus $H$ is a semi-direct product $P \ltimes K$.
\end{proof}

\subsubsection{Finiteness of type}

Write $P = \exp t R$, and let $\langle, \rangle$ be a scalar product
preserved by $K$ (as in the lemma above). An element of the Lie algebra $%
\mathfrak{h}$ of $H$ has the form $C+ \alpha R$, where $C$ is antisymmetric (%
$C = -C^*$). A 2-prolongation $A: \mathbb{R}^n \times \mathbb{R}^n \times
\mathbb{R}^n \to \mathbb{R}^n$ of $\mathfrak{h}$ is symmetric and satisfies
that $W \to A(U, V, W)$ belongs to $\mathfrak{h}$ for any $U, V$. Therefore $%
A$ satisfies a relation $$\langle A( U,V,W) ,W^\prime \rangle + \langle A(
U,V,W^\prime) ,W \rangle =K( U,V) \langle (R + R^*) W, W^\prime \rangle $$
for some $K$. As it will be seen later on, this is exactly the equation (\ref%
{tresse}) in the generalized Braid Lemma \ref{GeneralizedBraid} with $J =
\langle, \rangle$ and $J^\prime (., .) = \langle (R+R^*).,. \rangle$. By
this lemma, it follows that $\mathfrak{h}$ has type $\leq$ 2 if  the form $%
J^\prime $ is non-degenerate.

\subsection{Case of periodic curves}
\label{subsection.periodic}

\subsubsection{Metric on $\mathsf{Sym} ^+$}

\label{metric.on.metrics} Let $E$ be a vector space of dimension $n$. Its
space of Euclidean Riemannian metrics (i.e. scalar products) $\mathsf{Sym}
^+(E)$ admits itself a canonical Riemannian (but no longer Euclidean)
metric. To see it, observe first that $\mathsf{Sym} ^+(E)$ is an open set in
$\mathsf{Sym} (E)$, and hence the tangent space $T_b (\mathsf{Sym} ^+(E))$
at any point $b$ can be identified with $\mathsf{Sym} (E)$.
But a scalar product $b$ defines a scalar product $\bar{b}$ on $\mathsf{Sym}
(E)$: if $(e_i)$ is a $b$-orthonormal basis, then $e_i^* \otimes e_j^*$ is a
$\bar{b}$-orthonormal basis, where $(e_i^*)$ is the dual basis (one has to
check this does not depend on the basis).
Now, endow $T_b \mathsf{Sym} ^+(E)$ with $\bar{b}$.
Clearly, if $F$ is another vector space, then any isomorphism $E \to F$
induces an isometry $\mathsf{Sym} ^+(E) \to \mathsf{Sym} ^+(F)$.
In fact, $\mathsf{Sym} ^+(E)$ is a symmetric space $\mathsf{GL}(E) / \mathsf{%
O}(b)$, where $\mathsf{O}(b)$ is the orthogonal group of any $b \in \mathsf{%
Sym} ^+(E)$.

As an example for $E = \mathbb{R}$, one gets the metric $\frac{dx^2}{x^2}$ on $\mathbb{R}%
^*$, and for $E= \mathbb{R}^2$, one gets the direct product $\mathbb{H}^2 \times \mathbb{%
R}$ (where $\mathbb{H}^2$ is the hyperbolic plane).

\subsubsection{Topology}

\begin{lemma}
Let $\mathcal{C}$ be a GCS on $M$ and assume that for some $x_0 \in M$, $%
\mathcal{C}_{x_0}$ is a circle, i.e. a connected compact 1-manifold. Then,
the same is true for nearby points. More precisely, there is a neighborhood
$V$ of $\mathcal{C}_{x_0}$ in $\mathcal{C}$ and $U$ a neighborhood of $x_0$
such that $\pi: V \to U$ is a Seifert fibration.
\end{lemma}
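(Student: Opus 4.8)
The plan is to reduce the statement to Ehresmann's fibration theorem for proper submersions, the only genuine work being to isolate a neighbourhood of $\CC_{x_0}$ on which $\pi$ is proper. I begin by recording what the hypotheses give: since $\CC$ is a (regular) GCS, the projection $\pi : \CC \to M$ is a submersion, and by assumption the fibre $\CC_{x_0} = \pi^{-1}(x_0)$ is a compact connected $1$-manifold, i.e. a circle. Note that $\CC$ need not be proper over $M$, so Ehresmann cannot be applied to $\pi$ globally; the localisation step below is what repairs this.

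First I would pass to a proper piece. Using compactness of $\CC_{x_0}$, choose a relatively compact open $W \subset \CC$ with $\CC_{x_0} \subset W$ and $\overline W$ compact, and set $\partial W := \overline W \setminus W$, a compact set disjoint from $\CC_{x_0}$. Because $\CC_{x_0} = \pi^{-1}(x_0)$, the point $x_0$ does not lie in the compact image $\pi(\partial W)$, so I may pick an open ball $U \ni x_0$ in $M$ with $U \cap \pi(\partial W) = \emptyset$, and put $V := W \cap \pi^{-1}(U)$. Then $\CC_{x_0} \subset V$ and $V \cap \pi^{-1}(x_0) = \CC_{x_0}$, and $\pi : V \to U$ is a submersion as a restriction of one to an open set. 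It is moreover proper: for compact $K \subset U$ one has $\pi^{-1}(K) \cap V = \pi^{-1}(K) \cap W$, which is a closed subset of the compact set $\overline W$ — a limit point in $\partial W$ would have image in $K \subset U$ yet in $\pi(\partial W)$, contradicting $U \cap \pi(\partial W) = \emptyset$ — hence compact.

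Next I would invoke Ehresmann: a proper submersion of manifolds is a locally trivial fibre bundle, so $\pi : V \to U$ is a fibre bundle with typical fibre $V \cap \pi^{-1}(x_0) = \CC_{x_0}$, a circle. Since $U$ is connected, every nearby fibre $\CC_x \cap V$ is again diffeomorphic to $S^1$, which is the first assertion; and since $U$ is contractible the $S^1$-bundle is trivial, $V \cong U \times S^1$ with $\pi$ the first projection. A trivial circle bundle is in particular a Seifert fibration (indeed one without exceptional fibres), giving the stated conclusion.

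The only non-formal point, and the place I expect any difficulty, is the localisation in the second paragraph: because a GCS may have further sheets of $\CC$ accumulating over $x_0$, one must first carve out the neighbourhood $V$ on which $\pi$ becomes proper before Ehresmann is available. Everything afterward is standard. I would remark in passing that one could instead route through the theory of foliations by circles (Epstein's theorem that such a foliation with Hausdorff leaf space is Seifert), but this is unnecessarily heavy here, since the submersion $\pi$ already presents $U$ as the (Hausdorff) leaf space directly.
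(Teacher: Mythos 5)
Your proof is correct, and it takes a genuinely different route from the paper's. The paper argues foliation-theoretically: it considers the foliation $\mathcal F$ of $\CC$ by connected components of the fibres of $\pi$, restricts first over a small arc $I$ through $x_0$ (so that the component $S_0$ of $\pi^{-1}(I)$ containing $\CC_{x_0}$ is an annulus or Moebius band foliated by closed leaves, hence a trivial fibration over $I$), and then passes to a ball $U$, using closedness of leaves over arcs and triviality of the holonomy of $\CC_{x_0}$ to conclude that $\mathcal F$ fibres over $U$ --- in effect a hands-on local Reeb stability argument, much closer in spirit to the Epstein-type route you set aside than to your own. Your Ehresmann argument buys three things. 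First, brevity: everything reduces to one standard theorem. Second, rigour at exactly the point you flagged: the paper's last step takes $V=\pi^{-1}(U)$, which is vulnerable to the ``extra sheets'' phenomenon (other, possibly non-compact, components of $\CC$ lying over $U$ would prevent $\pi^{-1}(U)\to U$ from being a circle fibration); your cut-down $V=W\cap\pi^{-1}(U)$, on which $\pi$ is proper, is the clean repair, and it also makes precise why the lemma's first assertion should be read as concerning $\CC_x\cap V$ rather than the full fibre $\CC_x$. Third, a slightly stronger conclusion: you obtain a trivial circle bundle with no exceptional fibres, whereas the paper keeps the Moebius/monodromy-$\Z/2\Z$ possibility alive --- a case which in fact cannot occur, since a foliation whose leaves are components of fibres of a submersion has trivial leaf holonomy (holonomy along a loop preserves $\pi$, and $\pi$ is injective on a small transversal). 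What the paper's approach buys in return is self-containedness --- no appeal to Ehresmann --- and direct contact with the Seifert/holonomy language in which the statement is phrased.
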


\begin{proof}
Let $I$ be a small arc in $M$ containing $x_0$, then $S = \pi^{-1}(I)$ is a
surface containing $\mathcal{C}_{x_0}$. Let $S_0$ be the connected component
of $\mathcal{C}_{x_0}$ in $S$. For $I$ small enough, $S_0$ is a tubular
neighborhood of $\mathcal{C}_{x_0}$ in $S$, and it is thus an annulus or a
Moebius strip around $\mathcal{C}_{x_0}$.

Let us start considering
  the annulus case. When $x$ runs over $I$, the connected components of the $\mathcal{C}_x$ in $S_0$ determine a
1-dimensional foliation $\mathcal{F}$ of $S_0$. But, each $\mathcal{C}_x$ is
closed in $\mathcal{C}$ and hence each $\mathcal{F}$-leaf is closed. But
such a foliation on the annulus is trivial, i.e. a trivial fibration on the
interval

Now, consider the same foliation $\mathcal{F}$, but on a neighborhood $V =
\pi^{-1}(U)$ in $\mathcal{C}$, where $U$ is a small neighborhood of $x_0$
in $M$. Since $U$ can be generated by arcs, leaves of $\mathcal{F}$ are all
closed. But the holonomy of $\mathcal{C}_{x_0}$ in $U$ is trivial, since it
is so above any interval. Hence the foliation is a fibration.

Consider now the case where for some arcs $I$, $\pi^{-1}(I)$ is a Moebius strip. Then, on such a   surface, the foliation
$\mathcal F$ is
  a Seifert fibration with monodromy $\mathbb{Z}/2 \mathbb{Z}$.   As above, generate a
  neighborhood $U$ by arcs such that the holonomy on each of them is either trivial
  or has order 2. It follows that the  (global) holonomy has order 2, and hence $\mathcal F$
  is given by a Seifert fibration.

\end{proof}

\subsubsection{Geometry}

\begin{proposition}
If $\mathcal{C}$ has a circle fiber $\mathcal{C}_{x_0}$, then it is 1-rigid
at $x_0$. In fact, $\mathcal{C}$ determines naturally a Riemannian metric
near $x_0$.
\end{proposition}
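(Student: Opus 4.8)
The plan is to exploit the nonpositive curvature of the modular space $\Sym^+(T_xM)$ in order to attach to each compact curve $\CC_x$ a single canonical scalar product $b(x)$, thereby manufacturing an $\Iso(\CC)$-invariant Riemannian metric near $x_0$; the asserted $1$-rigidity then reduces to the classical $1$-rigidity of Riemannian metrics.

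First I would recall from \ref{metric.on.metrics} that the canonical metric turns $\Sym^+(T_xM)$ into the symmetric space $\GL(n,\R)/\O(n)$, which splits isometrically as $\R \times \SL(n,\R)/\SO(n)$ (in dimension $2$ this is the displayed $\H^2 \times \R$). In particular it is a Hadamard manifold: complete, simply connected, and of nonpositive sectional curvature. Moreover, by the functoriality recorded in \ref{metric.on.metrics}, any linear isomorphism $T_xM \to T_yM$ induces an \emph{isometry} $\Sym^+(T_yM) \to \Sym^+(T_xM)$. These two facts are the whole source of the rigidity.

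Second, in a Hadamard manifold every compact subset admits a unique circumcenter, namely the center of the smallest closed ball containing it, and this point is fixed by every isometry preserving the subset; completeness guarantees it lies in $\Sym^+$ itself. Applying this to the circle $\CC_x$ yields a distinguished scalar product $b(x) \in \Sym^+(T_xM)$. A technically smoother variant is to take the Riemannian center of mass of $\CC_x$ for the arc-length measure $\mu_x$ induced by the canonical metric: the energy $p \mapsto \int d(p,q)^2\, d\mu_x(q)$ is strictly convex by nonpositive curvature, so it has a unique minimizer, and its vanishing gradient has nondegenerate Hessian, so the implicit function theorem yields smooth dependence once the smooth variation of the embedded circle $\CC_x$ is known. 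By the preceding lemma the nearby fibers are again circles depending smoothly on $x$ in a neighborhood $U$ of $x_0$; hence $x \mapsto b(x)$ is a smooth section of $\Sym^+(TM)$, i.e. a Riemannian metric $b$ near $x_0$.

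For invariance, an automorphism $\phi$ of $\CC$ induces on fibers the isometry $\phi^* : \Sym^+(T_{\phi(x)}M) \to \Sym^+(T_xM)$ carrying $\CC_{\phi(x)}$ onto $\CC_x$; since the circumcenter (respectively the center of mass for arc length, which $\phi^*$ preserves) is isometry-invariant, $\phi^*\big(b(\phi(x))\big) = b(x)$, so $\phi$ is a $b$-isometry. As Riemannian metrics are $1$-rigid, an automorphism of $\CC$ with trivial $1$-jet at $x_0$ is the identity near $x_0$, which is exactly $1$-rigidity of $\CC$ at $x_0$, with $b$ the promised metric. The main obstacle I anticipate is precisely the regularity of $x \mapsto b(x)$: the circumcenter is only a priori Lipschitz in the Hausdorff distance, so proving smoothness is delicate, and I expect the center-of-mass formulation — where nonpositive curvature supplies the nondegenerate Hessian needed for the implicit function theorem — to be the cleanest route around it.
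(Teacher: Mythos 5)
Your proposal is correct in substance, but it takes a genuinely different route from the paper's. The paper also starts from the arc-length parameterization $t \mapsto f(t) \in \CC_x$ induced by the canonical metric on $\Sym^+(T_xM)$, but then takes the plain \emph{linear} mean $g_x = \int f(t)\,dt$, an integral computed in the ambient vector space $\Sym(T_xM)$: since $\Sym^+(T_xM)$ is an open convex cone, this mean is again positive definite, and since an automorphism acts on fibers by a map which is simultaneously linear (hence commutes with the integral) and an isometry of the canonical metrics (hence preserves the arc-length parameterization), equivariance is immediate; smoothness in $x$ is likewise immediate from smoothness of the fibration. Your Hadamard-space construction (circumcenter, or better the Karcher mean) buys intrinsic generality --- it would work for fibers lying in any complete, nonpositively curved space with no ambient linear structure --- but at the price of exactly the regularity problem you flag, which the paper's linear averaging makes vanish: there is nothing to prove beyond differentiation under the integral sign. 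One caveat applies to both arguments: in this paper $1$-rigidity is a jet-level statement (a $2$-isometry with trivial $1$-jet has trivial $2$-jet), and a $2$-isometry need not preserve $\CC$, only have first-order contact with it along $\CC_p$; your invariance argument covers only genuine automorphisms, so you still need the verification that a $d$-isometry of $\CC$ is a $d$-isometry of $b$ --- a step the paper itself only records as ``one then verifies,'' but which your center-of-mass construction makes somewhat harder to check than the explicit linear mean does.
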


\begin{proof}
For all $x$, $\mathcal{C}_x$ is a circle in $\mathsf{Sym} ^+(T_xM)$. Consider an arc
length parameterization $t \in [0, l] \to f(t) \in \mathcal{C}_x$, where $l$
is the length of $\mathcal{C}_x$ ($f$ is defined up to a choice of an
origin). The mean $\int f(t)dt$ is a canonically defined element of $\mathsf{%
Sym} ^+(T_xM)$, call it $g_x$. Since $\pi$ is a smooth fibration, $g_x$
depends smoothly on $x$, that is $g$ is a smooth Riemannian metric defined
on a neighborhood of $x_0$.

One then verifies that a $d$-isometry of $\mathcal{C}$ is a $d$-isometry for
$g$. In order to check it, one considers the mapping which associates $g$ to
$\mathcal{C}$, say $F: \mathcal{G }\to \mathcal{M}$, defined on the space of
GCS with circle fibers, and having as a target the space of Riemannian
metrics $\mathcal{M }$. Locally, an element of $\mathcal{G}$ is a mapping $M
\to \mathcal{L}$, where $\mathcal{L}$ is the space of arc-length
parameterized circle maps $\mathbb{S}^1 \to \mathsf{Sym} (\mathbb{R}^n)$.
The mapping $F: \mathcal{G }\to \mathcal{M}$ is just a mean, and therefore
smooth. From all these constructions follows that if $\mathcal{C}$ and $%
\mathcal{C}^\prime$ have contact up to order $d$ at $p$, then the same is
true for $g = F(\mathcal{C})$ and $g^\prime = F(\mathcal{C}^\prime)$.
Applying this to $\mathcal{C}^\prime = \phi^*(\mathcal{C})$ yields that a $d$%
-isometry for $\mathcal{C}$ (at $p$) is a $d$-isometry for $g$.

Finally, by 1-rigidity of Riemannian metrics (that is a 2-isometry with
trivial 1-jet has a trivial 2-jet) we deduce that $\mathcal{C}$ is 1-rigid.
\end{proof}

\section{A generalized Braid Lemma}
\label{Section.Braid}

\label{Braid} The classical well known Braid Lemma (see for instance \cite%
{berger}) states:

\begin{lemma}
\label{Braidlemma}[Braid Lemma] If $L$ is a trilinear map $E\times E\times E
\to E$ on a vector space $E$, such that $L$ is symmetric on the two first
variables and skew-symmetric on the two last ones, then $L=0$. In
particular, if $A$ is a bilinear map $E\times E \to E$ such that
\begin{equation*}
<A(U,V),W>+<A(U,W),V>=0 \text { for all } U,V \text{ and }W \text{ in }E,
\end{equation*}
where $\langle, \rangle$ is a Euclidean scalar product, then $A=0$.

If fact this is also true for pseudo-scalar products, that is for $\langle,
\rangle$ replaced by any non-degenerate symmetric bilinear form.
\end{lemma}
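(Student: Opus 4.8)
The plan is to prove the trilinear-map assertion first, using nothing but the two symmetry relations, and then to deduce the bilinear ``in particular'' statement and its pseudo-Riemannian strengthening as immediate corollaries.

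For the first part I would run the classical \emph{braid} computation. Write the two hypotheses as $L(U,V,W)=L(V,U,W)$ (symmetry in the first two slots) and $L(U,V,W)=-L(U,W,V)$ (skew-symmetry in the last two). Starting from $L(U,V,W)$ I apply these alternately, transposing at each stage either the last pair or the first pair, which produces the chain
\begin{align*}
L(U,V,W) &= L(V,U,W) = -L(V,W,U) = -L(W,V,U)\\
&= L(W,U,V) = L(U,W,V) = -L(U,V,W).
\end{align*}
Hence $2L(U,V,W)=0$, and since $E$ is a real vector space this forces $L=0$. I stress that this argument uses only the prescribed symmetries and no inner product whatsoever; in particular it is valid for trilinear maps with values in any real vector space, the scalar-valued case included.

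For the ``in particular'' statement I would take $A$ to be symmetric, as it is in every application to prolongations where the lemma is invoked (this hypothesis is genuinely needed: the cross product on $\R^3$ shows that skew-symmetry of $\langle A(U,V),W\rangle$ in $(V,W)$ alone does not force $A=0$). Set $\tilde L(U,V,W)=\langle A(U,V),W\rangle$. Symmetry of $A$ gives symmetry of $\tilde L$ in its first two arguments, while the hypothesis $\langle A(U,V),W\rangle+\langle A(U,W),V\rangle=0$ is exactly skew-symmetry of $\tilde L$ in its last two. By the scalar case of the braid computation above, $\tilde L\equiv 0$, that is $\langle A(U,V),W\rangle=0$ for all $W$; non-degeneracy of $\langle,\rangle$ then yields $A(U,V)=0$ for all $U,V$, so $A=0$.

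Finally, the pseudo-scalar extension requires no new idea: the braid chain never invoked positive-definiteness, and an inner product entered only at the very last step, where I used solely that $\langle,\rangle$ is non-degenerate in order to pass from $\tilde L=0$ to $A=0$. The same proof therefore applies verbatim with $\langle,\rangle$ any non-degenerate symmetric bilinear form. There is no serious obstacle to overcome here; the only points demanding care are bookkeeping the signs along the permutation chain and the use of characteristic $\neq 2$ (automatic over $\R$), together with the observation that the bilinear corollary must carry the symmetry assumption on $A$.
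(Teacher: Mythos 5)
Your proof is correct. The paper itself offers no proof of this lemma: it is quoted as classical, with a pointer to Berger, so there is nothing to compare line by line; your braid chain
$L(U,V,W)=L(V,U,W)=-L(V,W,U)=-L(W,V,U)=L(W,U,V)=L(U,W,V)=-L(U,V,W)$
is exactly the standard argument the paper implicitly invokes, and you are right that it uses no inner product at all, so the scalar-valued case and the pseudo-scalar refinement come for free once non-degeneracy is used at the very end. The one place where you genuinely add content is your remark on the ``in particular'' clause: as literally stated (with $A$ an arbitrary bilinear map), the claim is false, and your counterexample is the right one --- for $A(U,V)=U\times V$ on $\R^3$ one has $\langle A(U,V),W\rangle+\langle A(U,W),V\rangle=\det(U,V,W)+\det(U,W,V)=0$ while $A\neq 0$. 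Symmetry of $A$ is precisely what makes $\tilde L(U,V,W)=\langle A(U,V),W\rangle$ symmetric in its first two slots, and it does hold in every use the paper makes of the lemma: the maps fed into it are $\phi_p^{\prime\prime\prime}$, $\phi_{(p,r)}^{\prime\prime}$, and the symmetric $3$-linear $A$ of the generalized Braid Lemma (there one fixes $U,V$ and applies the scalar braid identity to the variables $(V,W,W^\prime)$), all symmetric because they arise as higher derivatives or are assumed so. So your added hypothesis is not a restriction in context but a necessary correction to the statement as printed.
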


This statement is equivalent to the vanishing of 1-prolongations of the
orthogonal group $O(E, \langle, \rangle)$, and thus to the 1-rigidity of a
Riemannian structures.

We are going here to give a generalized Braid Lemma adapted to GC
structures, which is in fact a slight generalization
of the classical result on vanishing  of second prolongations of
$\mathfrak{co}(n)$, see  for instance \cite{Ballmann}  and (\cite{sternberg}, page 335). Now, $A$ will be a trilinear symmetric map $E \times E \times E
\to E$, where $E $ is a vector space which will be always assumed to have
dimension $\geq 3$.

\begin{proposition}
\label{GeneralizedBraid} [Generalized Braid Lemma] Let $A$ be a symmetric
3-linear vectorial form $E \times E \times E \to E$ satisfying:
\begin{equation}
J( A( U,V,W) ,W^\prime) +J( A( U,V,W^\prime) ,W) =K( U,V) J^{\prime }( W,
W^\prime)  \label{tresse}
\end{equation}
where $J, J^\prime$ and $K$: $E\times E \to \mathbb{R}$ are some symmetric
bilinear forms.

If $J$ and $J^\prime$ are non-degenerate, then $A = 0$.
\end{proposition}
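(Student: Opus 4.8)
The plan is to transfer the statement to a purely scalar multilinear identity and then run a braid-type symmetrization, exactly as in the classical Braid Lemma \ref{Braidlemma}, the only new feature being the inhomogeneous right-hand side $K\,J^\prime$. Since $J$ is non-degenerate, I would set $B(U,V,W,W^\prime)=J(A(U,V,W),W^\prime)$; then $A=0$ if and only if $B=0$, and because $A$ is symmetric, $B$ is symmetric in its first three arguments. In these terms the hypothesis (\ref{tresse}) reads
\[
B(U,V,W,W^\prime)+B(U,V,W^\prime,W)=K(U,V)\,J^\prime(W,W^\prime),
\]
so that transposing the last two slots of $B$ reverses its sign up to the explicit term $K\,J^\prime$.

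Next I would braid. Writing the relation as $B(a,b,c,d)=-B(a,b,d,c)+K(a,b)J^\prime(c,d)$ and alternating it three times with the symmetry of $B$ in its first three arguments (to walk the distinguished last slot around the three symmetric ones) brings $B(a,b,c,d)$ back to $-B(a,b,c,d)$ plus explicit terms, yielding the closed formula
\[
2\,B(a,b,c,d)=K(a,b)J^\prime(c,d)+K(a,c)J^\prime(b,d)-K(a,d)J^\prime(b,c).
\]
Thus $B$ is completely determined by $K$ and $J^\prime$, and it remains only to force $K$ to vanish.

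The constraint comes from feeding this formula back into the symmetry of $B$. Symmetry under $b\leftrightarrow c$ is automatic (using $J^\prime(b,c)=J^\prime(c,b)$), but symmetry under $a\leftrightarrow b$ gives the nontrivial relation
\[
K(a,c)J^\prime(b,d)-K(a,d)J^\prime(b,c)=K(b,c)J^\prime(a,d)-K(b,d)J^\prime(a,c).
\]
In a basis, writing $K_{ij}$ and $g_{ij}=J^\prime(e_i,e_j)$ with inverse $g^{ij}$, I would contract this identity against $g^{bd}$; using $\sum_{b,d}g^{bd}g_{bd}=n$ it collapses to $(n-2)K_{ac}=-\tau\,g_{ac}$ with $\tau=\tr(g^{-1}K)$. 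Here the dimension hypothesis is essential: since $n\geq 3$ one may divide by $n-2$ and conclude that $K=\lambda J^\prime$ for some scalar $\lambda$.

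Finally, substituting $K=\lambda J^\prime$ back into the symmetry relation turns it into $2\lambda\,(g_{ac}g_{bd}-g_{ad}g_{bc})=0$ for all indices; choosing a $J^\prime$-orthogonal basis and taking $a=c$, $b=d$ with $a\neq b$ gives $g_{ac}g_{bd}-g_{ad}g_{bc}=g_{aa}g_{bb}\neq 0$ by non-degeneracy of $J^\prime$, whence $\lambda=0$, so $K=0$, so $B=0$, and finally $A=0$ since $J$ is non-degenerate. I expect the braiding step to be the delicate point of bookkeeping — one must interleave the transposition law with the three-fold symmetry in precisely the right order to return to $-B$ — whereas the genuine mathematical content, and the only place where $\dim E\geq 3$ enters, is the contraction producing the factor $n-2$.
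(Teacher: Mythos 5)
Your proof is correct, and while it shares the paper's overall skeleton --- first force $K$ to be proportional to $J^\prime$, then show the scalar vanishes, then kill $A$ by braid symmetrization --- the two decisive steps are executed by genuinely different means. The paper first \emph{eliminates} $A$: substituting (\ref{tresse}) into itself via the symmetry of $A$ gives the pure identity (\ref{KJprime}) in $K$ and $J^\prime$; it then shows $K(W_1,W_2)=0$ for $J^\prime$-orthogonal pairs by evaluating on a mutually orthogonal triple $W_1,W_2,W_3$ (this is where $\dim E\geq 3$ enters), upgrades this to $K=\alpha J^\prime$ by an eigenvector/homothety argument for the endomorphism $P$ defined by $K(U,V)=J^\prime(U,PV)$, shows $\alpha=0$, and only then invokes the classical Braid Lemma \ref{Braidlemma} as a black box. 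You instead run the braid walk at the outset and obtain the closed formula $2B(a,b,c,d)=K(a,b)J^\prime(c,d)+K(a,c)J^\prime(b,d)-K(a,d)J^\prime(b,c)$, which is a strictly stronger intermediate statement: it shows that (\ref{tresse}) \emph{determines} $B$, hence $A$, in terms of $K$ and $J^\prime$. The symmetry constraint you then extract is, after renaming variables and using the symmetry of $K$ and $J^\prime$, equivalent to the paper's (\ref{KJprime}); but your route from it to $K=\lambda J^\prime$ is a trace contraction against $(J^\prime)^{-1}$, with the dimension hypothesis appearing as invertibility of $n-2$ rather than as the existence of a third orthogonal vector. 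What each buys: your version is self-contained (once $K=0$, your formula reads $2B=0$ directly, with no appeal to the classical lemma) and classifies all solutions of (\ref{tresse}) along the way, while the paper's argument is basis-free and avoids index computations. One simplification you missed: contracting your identity $(n-2)K_{ac}=-\tau g_{ac}$ once more with $g^{ac}$ gives $(n-2)\tau=-n\tau$, hence $\tau=0$ and $K=0$ immediately, so your final substitution into the symmetry relation and the orthogonal-basis evaluation are redundant (though correct).
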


\begin{proof}
A direct computation gives us:
\begin{eqnarray}
K( U,V) J^{\prime }( W,W^{\prime }) &+& K( W,W^{\prime }) J^{\prime }( U,V)
\label{KJprime} \\
&=&K( U,W) J^{\prime }( V,W^{\prime }) +K( V,W^{\prime }) J^{\prime }( U,W)
\notag
\end{eqnarray}
(One just replaces each term as $K( U,V) J^{\prime }( W,W^{\prime })$ by its
equivalent in the right hand of (\ref{tresse}), and uses the fact that $A$
is symmetric).

Now let $W_1$ and $W_2$ be two $J^\prime$-orthogonal vectors: $J^\prime(W_1,
W_2) = 0$. Let $W_3$ be a third vector $J^\prime$-orthogonal to $\mathbb{R}
W_1 + \mathbb{R} W_2$ and $J^\prime(W_3, W_3) \neq 0$. Such $W_3$ exists
because $\dim E \geq 3$ and $J^\prime$ is non-degenerate. We have:
\begin{eqnarray*}
K( W_1,W_2) J^{\prime }( W_3,W_3 ) &+& K( W_3,W_3) J^{\prime }(W_1,W_2) \\
&=& K( W_1,W_3) J^{\prime }(W_2,W_3) +K( W_2,W_3) J^{\prime }(W_1,W_3)
\end{eqnarray*}
which implies $K(W_1, W_2) = 0$.

Write $K(U, V) = J^\prime(U, P(V))$, where $P$ is a $J^\prime$-symmetric
endomorphism of $E$.

Let $W_1$ with $J^\prime(W_1, W_1) \neq 0$, and denote by $W_1^\perp$ its $%
J^\prime$-orthogonal. It follows that $P(W_1) $ is orthogonal to $W_1^\perp$%
, and hence $P(W_1) \in \mathbb{R} W_1$, that is $W_1$ is an eigenvector of $%
P$.

Thus $P$ has all vectors $W_1$ with non-vanishing $J^\prime(W_1, W_1)$ as
eigenvectors. It follows that $P$ is a homothety, that is $K = \alpha
J^\prime$ for some $\alpha \in \mathbb{R}$.

Now, using (\ref{KJprime}) for $V=U$, $W^{\prime }=W$ and $J^{\prime }(U,W)=0
$, we get:
\begin{equation*}
\alpha J^{\prime }(U,U)J^{\prime }(W,W)=0
\end{equation*}%
which implies $\alpha =0$ (since we can easily choose $U$ and $W$ with
non-vanishing (square) $J^{\prime }$-norm). Therefore, (\ref{tresse})
becomes
\begin{equation*}
J(A(U,V,W),W^{\prime })+J(A(U,V,W^{\prime }),W)=0
\end{equation*}%
which implies by the classical Braid Lemma that $A=0$.
\end{proof}

\section{Proof of the generalized Liouville Theorem}
\label{Section.Liouville}

\subsection{Set-up of the problem}

Let $(M, \mathcal{C})$ be a GC manifold. The investigations in the present
section are local in nature, so the manifold $M$ can be identified with an
open set in $\mathbb{R}^n$ with coordinates $(x^1, \ldots, x^n)$. In fact,
we will work on a small neighborhood of a fixed point $p$.

So far, we have studied the situation where a component of $\mathcal{C}_p$
is a circle, and proved 1-rigidity in this case.

So we will now consider the opposite situation where all components of $%
\mathcal{C}_{p}$ are injective images of $\mathbb{R}$. We choose one
component and analyze $\mathcal{C}$ around it. The projection $\pi $ is not
necessarily a locally trivial fibration, but restricting to a small
neighborhood of $p$ (that we will still denote $M$), any neighbourhood of a
bounded arc of $\mathcal{C}_{p}$ can be parameterized by a map
\begin{equation*}
J:M\times I\rightarrow J(x,r)\in \mathsf{Sym}^{+}(T_{x}M)
\end{equation*}%
where $I$ is a bounded interval of $\mathbb{R}$. We can also assume that $%
r\rightarrow J(x,r)$ is an arc length parameterization, for any $x$,
although we do not need it. So locally,
\begin{equation*}
J(x,r)=\sum_{i,j}a_{ij}(x,r)dx^{i}dx^{j}
\end{equation*}%
We will always assume that
 the associated lightlike structure is
  nowhere transversally
Riemannian,  that  $\partial _{r}J\neq 0$.

\subsubsection{Isometries}

\label{isometries} For $\phi$ a diffeomorphism of $M$,  $\phi_x^\prime$ denotes
its derivative at $x$.

A diffeomorphism $\phi$ is isometric if its natural action on $\mathsf{Sym}
^+(TM)$ preserves $\mathcal{C}$, that is $\phi_x^\prime (\mathcal{C}_x) =
\mathcal{C}_{\phi(x)}$.
If the parameterization $J$ were global, then the isometric property implies
the existence of a re-parameterization $(x,r)\rightarrow k(x,r)\in \mathbb{R}
$, such that:
\begin{equation}
J(\phi (x),k(x,r))((\phi _{x}^{\prime })(U),(\phi _{x}^{\prime
})(V))-J{(x,r)}(U,V)=0,\forall \;U,V\;\hbox{vector fields}.  \label{isometry}
\end{equation}
Remark that, although we will not use it,  if the
$\mathcal{C}$-curves are parameterized by arc length, then $k$ has
the form $k(x, r) = \delta(x) + r$.

Now, if the parameterization is not global, one just has to take care of the
domains of definition; the same equation remains true. Actually, one has a
map $(x, r) \in M_1 \times I \to (\phi(x), k(x, r)) \in M_2 \times K$, where
$K$ is another interval, $M_1$ and $M_2$ are open subsets of $M$. However,
for the sake of simplicity of notation, we will argue as if the
parameterization is global, say $I = K$, and also $M_1 = M_2 =M$.

\subsection{Notation} \label{notations}

${}$

The notation $\phi _{x}^{\prime }$ designs the total derivative of the diffeomorphism $%
\phi $. Second and third total derivatives are denoted $\phi
_{x}^{\prime \prime }$ and $\phi _{x}^{\prime \prime \prime }$
respectively, the higher ones of order $m,m\in \mathbb{N}^{\ast }$,
are denoted $\phi _{x}^{(m)}$.
For a function $a$ on $(x,r)$ we denote the derivative with respect
to $x$
at a point $(p,r)$ by $\mathbf{D}_{(p,r)}a$ (i.e. the differential of $%
x\rightarrow a(x,r)$ where $r$ is fixed). We similarly denote the
same derivative of order $m$ by $\mathbf{D}_{_{(p,r)}}^{(m)}a$.
Regarding the derivative with respect to $r$ at a point $(p,r)$, we
just denote it $\partial _{(p,r)}a$.

\subsubsection{Infinitesimal isometries}

Assume now that $\phi(p) = p$. By definition, $\phi$ is a $d$-isometry at $p$
if $\mathcal{C}$ and its image $\phi^*(\mathcal{C})$ have a contact at order
$d$ along $\mathcal{C}_p$ (\ref{Rigidity}).
As in the classical case, one shows this is equivalent to the usual
vanishing condition up to order $d$, at $p$, of   the  equality (\ref{isometry}). More
precisely, for a given function $k$, and $U,V$ vector fields, let
\begin{equation*}
\Delta _{k}(U,V)(x,r)=J(\phi (x),k(x,r))((\phi _{x}^{\prime })(U),(\phi
_{x}^{\prime })(V))-J(x,r)(U,V)
\end{equation*}%
Then, $\phi $ is a $d$-isometry at $p$, if there exists a function $k$ such
that the derivatives with respect to $x$   up to order $(d-1)$ of $\Delta _{k}(U,V)$ vanish at $%
(p,r)$, for any vector fields $U$ and $V$ and any $r$.
Actually, it suffices to check this for $U$ and $V$ elements of a frame
field on $M$, for example the natural vector fields $\frac{\partial }{%
\partial x^i}$. In the sequel, we will take $U$ and $V$ to be combination
with constant coefficients of the $\frac{\partial }{\partial x^i}$.

\subsection{$(d+1)$-Isometries for $d\geq 2$}

If $\phi $ is an isometry of order $d+1$ at $p$, then for all
$r$
\begin{equation}
J(p,r)(U,V)=J(\phi \left( p\right) ,k\left( p,r\right) )(\phi
_{p}^{\prime }(U),\phi _{p}^{\prime }(V)),  \forall  \; U,V\in T_{p}M
\label{l=1}
\end{equation}%
Taking derivative (with respect to $x$) at $p$ gives, %
\begin{eqnarray}
\mathbf{D}_{(p,r)}J(W_{1})(U,V) &=&\mathbf{D}_{(\phi \left( p\right)
,k\left( p,r\right) )}J(\phi _{p}^{\prime }(W_{1}))(\phi _{p}^{\prime
}(U),\phi _{p}^{\prime }(V))  \label{l=2} \\
&&+\mathbf{D}_{(p,r)}k(W_{1})\mathbf{\partial }_{(\phi \left(
p\right) ,k\left( p,r\right) )}J(\phi _{p}^{\prime }(U),\phi
_{p}^{\prime }(V))
\notag \\
&&+J(\phi \left( p\right) ,k\left( p,r\right) )(\phi _{p}^{\prime \prime
}(U,W_{1}),\phi _{p}^{\prime }(V))  \notag \\
&&+J(\phi \left( p\right) ,k\left( p,r\right) )(\phi _{p}^{\prime
}(U),\phi _{p}^{\prime \prime }(V,W_{1}) \notag
\end{eqnarray}%
for all $U,V,W_{1}\in T_{p}M.$

\begin{lemma}
Let  $\phi $ be  an isometry of order $(d+1)$ at $p$ with a $d$-trivial jet
($jet_{p}^{d}(\phi )=1$) then, for all $r$
\begin{equation*}
k\left( p,r\right) =r\text{ and }\mathbf{D}_{(p,r)}^{(m)}k=0\text{ for }%
1\leq m\leq d-1
\end{equation*}
\end{lemma}

\begin{proof} This will follow from taking derivatives of  (\ref{l=1}) up to order $d-1$. But,
since $jet_{p}^{d}(\phi )=1$, one can argue as if $\phi$ was the identity, that is
  (\ref{l=1}) becomes%
\begin{equation} \label{l=1'}
J{(p,r)}(U,V)=J{(p,k\left( p,r\right) )}(U,V),\forall U,V\in T_{p}M
\end{equation}%
This formula   involves   $k$ only.

\bigskip

  This equality itself implies that $k(p,r)=r$. Indeed,
by     our hypotheses (in the beginning of the present  \S),
$\mathcal{C}_{p}$ is a 1-dimensional submanifold without compact components,
in particular, $r\rightarrow J(p,r)$ is injective, and hence $k(p,r)=r$.

\bigskip

To prove vanishing of ${\mathbf D}_{(p,r)}k$ (the differential of $k$
with respect to $x$), just differentiate   the   formula  (\ref{l=1'}) (for instance by  replacing
in   (\ref{l=2}))  and get
\begin{equation*}
\mathbf{D}_{(p,r)}J(W_{1})(U,V)=\mathbf{D}_{(p,r)}J(W_{1})(U,V)+\mathbf{D}%
_{(p,r)}k(W_{1})\mathbf{\partial }_{(p,r)}J(U,V)
\end{equation*}%
which means%
\begin{equation*}
\mathbf{D}_{(p,r)}k(W_{1})\mathbf{\partial }_{(p,r)}J(U,V)=0
\end{equation*}%
(Remember the notation ${\partial }_{(p,r)}$ introduced in \S \ref{notations}).
But, by definition of GCS, the curve $r\rightarrow J(p,r)\in \mathsf{Sym}%
^{+}(T_{p}M)$ is non-singular, and hence $(U,V)\rightarrow \mathbf{\partial }%
_{(p,r)}J(U,V)$ is a non-vanishing bilinear form, and so $\mathbf{D}%
_{(p,r)}k=0$.

\bigskip

Finally, vanishing of higher order derivatives is done by induction.
Assume  $\mathbf{D}_{(p,r)}^{(m)}k=0$ for
$m\leq l$ (with $ 1\leq l\leq d-2$), and
 take the derivative of order $l+1$
  of  the equality  (\ref{l=1'}) at $p$.
  All terms containing $%
\mathbf{D}_{(p,r)}^{(m)}k$ for $1\leq m\leq l$  disappear and
remains the equality
\begin{eqnarray*}
\mathbf{D}_{(p,r)}^{(l+1)}J(W_{1},...,W_{l+1})(U,V) &=&\mathbf{D}%
_{(p,r)}^{(l+1)}J(W_{1},...,W_{l+1})(U,V) \\
&&+\mathbf{D}_{(p,r)}^{(l+1)}k(W_{1},...,W_{l+1})\mathbf{\partial }%
_{(p,r)}J(U,V)
\end{eqnarray*}%
for all $W_{1},...,W_{l+1}$ in $T_{p}M$, which implies  (as in the case of
$\mathbf{D}_{(p,r)}^{}k$)
\begin{equation*}
\mathbf{D}_{(p,r)}^{(l+1)}k=0
\end{equation*}
\end{proof}

\begin{lemma}
Let $\phi $ be  an isometry of order $(d+1)$ at $p$ such that $jet_{p}^{d}(\phi )=1$. Then,
 $\phi _{p}^{(d+1)} $ satisfies
\begin{gather}
J(p,r)(\phi _{p}^{(d+1)}(U,W_{1},...,W_{d}),V)+J(p,r)(U,\phi
_{p}^{(d+1)}(V,W_{1},...,W_{d}))  \label{l=l} \\
=-\mathbf{D}_{(p,r)}^{(d)}k(W_{1},...,W_{d})\mathbf{\partial }_{(p,r)}J(U,V)
\notag
\end{gather}%
for any $r$ and all $W_{1},...,W_{d},U,V$ in $T_{p}M$.
\end{lemma}

\begin{proof}

Computation of the derivative at order $(d-1)$ of  (\ref{l=2})  at $p$, will be
drastically simplified by the fact that
$\phi_p ^{\prime }=Id,\phi _{p}^{(m)}=0$
   for $2\leq m\leq d$,
  $k(p, r)= r$ and
$\mathbf{D}_{(p,r)}^{(m)}k=0$ for $1\leq m\leq d-1$ (by the previous lemma),
and reduces  exactly to
\begin{gather*}
\mathbf{D}_{(p,r)}^{(d)}J(W_{1},...,W_{d})(U,V)=\mathbf{D}%
_{(p,r)}^{(d)}J(W_{1},...,W_{d})(U,V)+\mathbf{D}%
_{(p,r)}^{(d)}k(W_{1},...,W_{d})\mathbf{\partial }_{(p,r)}J(U,V) \\
+J(p,r)(\phi _{p}^{(d+1)}(U,W_{1},...,W_{d}),V)+J(p,r)(U,\phi
_{p}^{(d+1)}(V,W_{1},...,W_{d}))
\end{gather*}%
for all $W_{1},...,W_{d},U,V$ in $T_{p}M$, witch is exactly
(\ref{l=l}).
\end{proof}

\subsection{End of the proof of Theorem \protect\ref{Liouville}} With notations
of the previous lemma, we have to prove that $\phi _{p}^{(d+1)}=0$. This will indeed
follow form a straightforward application of the  Generalized Braid Lemma \ref{GeneralizedBraid}
to (\ref{l=l}).

For the sake of clarity, let us   first start with  the case $d=2$.
So  $\phi $ is a 3-isometry at $p$ with a trivial 2-jet, thus by
(\ref{l=l})
\begin{gather*}
J(p,r)(\phi _{p}^{\prime \prime \prime
}(U,W_{1},W_{2}),V)+J(p,r)(U,\phi _{p}^{\prime \prime
\prime }(V,W_{1},W_{2}) \\
=-\mathbf{D}_{(p,r)}^{(2)}k(W_{1},W_{2})\mathbf{\partial }%
_{(p,r)}J(U,V)
\end{gather*}%
Apply the Generalized Braid Lemma   with $A=\phi _{p}^{\prime \prime
\prime }$, $J=J(p,r)$, $K=\mathbf{D}_{(p,r)}^{2}k$ and $J^{\prime
}=-\partial _{(p,r)}J$, which is actually  non-degenerate by the genericity hypothesis
on $\mathcal{C}$. Then
conclude that $A= \phi _{p}^{\prime \prime \prime }=0$.

In the general case,  $d>2$, apply  the Generalized Braid Lemma  with  $J=J(p,r)$, $J^{\prime }=-\partial _{(p,r)}J$ and $K=\mathbf{D}%
_{(p,r)}^{(d)}k(.,.,W_{3},...,W_{d})$,   where   $W_{3},...,W_{d}$  are fixed  vectors in $%
T_{p}M$. One gets that  $A=\phi
_{p}^{(d+1)}(.,.,.,W_{3},...,W_{d})=0$. But since $W_3, \ldots W_d$
are arbitrary,
  $\phi _{p}^{(d+1)}=0$.

\section{Sub-rigidity of lightlike metrics, Proof of Theorem \protect\ref%
{subrigid}}
\label{subrigidity_lightlike}

\subsection{Setting of the problem}

Let $(\mathcal{V},g)$ be a lightlike $n$-dimensional manifold. Since
we are dealing with questions local in nature, so we can assume
$\mathcal{V}$ is a small chart domain, say $\mathcal{V} = M \times
I$ where $I$ is an interval. The factor $I$ corresponds to the
characteristic foliation tangent to the kernel of $g$. In an adapted
coordinate system $(x,t)=(x^1,x^2,...,x^{n-1},t)$ ($t$ corresponds
to $I$), the lightlike metric takes the form
\begin{equation*}
g_{(x,t)}=\sum_{i,j}a_{ij}(x,t)dx^idx^j
\end{equation*}
This gives for any fixed $r$, a Riemannian metric on $M \times \{ r \}$. By
endowing $T_xM$ with the scalar products $g_{(x, r)}, r \in I$, we get a GCS
on $M$, once we assume $g$ nowhere transversally Riemannian, that is $\frac{%
\partial}{\partial t} g_{(x, t)} \neq 0$ (see \ref{transversalyRiemaninnian}%
). Recall that $g$ is said to be generic if $\frac{\partial}{\partial t}
g_{(x, t)} = \sum_{i,j} \frac{\partial a_{ij}}{\partial t}(x,t)dx^idx^j$ is
non-degenerate.

A diffeomorphism $\Psi$ of $M$ has the form $\Psi=(\phi,\delta)$ where $%
\phi: M\times I\rightarrow M$ and $\delta: M\times I\rightarrow I$.

If $\Psi$ is isometric, then it preserves the $I$ foliation, and hence $\phi$
does not depend of $t$. Furthermore, for any $U$ and $V$ in $T_{(x, t)}%
\mathcal{V}$
\begin{equation}
g_{(x,t)}(U,V)=g_{\Psi(x,t)}(\Psi^\prime_{(x,t)}(U),\Psi^\prime_{(x,t)}(V))
\label{lightlikeisometry}
\end{equation}

A tangent vector $U \in T_{(x, t)}\mathcal{V}$ will be denoted $(U_M, U_I)
\in T_x M \times T_t I$.

\bigskip

As said after the  statement of the Theorem, we will start giving
the proof   in the case $d=1$ which consists in three steps. The
higher order case will be treated at \S \ref{higher_degree}.

\subsection{Step 1: a partial 1-rigidity}

\textit{If $\Psi$ is isometric up to order 2, with a trivial 1-jet at a
point $(p,r)\in \mathcal V$ then $\phi^{\prime\prime}_{(p,r)}=0$.}

\bigskip \noindent

\begin{proof}
If $\Psi=(\phi,\delta)$ is isometric up to order 2 then the equality of (\ref%
{lightlikeisometry}) holds for the derivatives at $(x,t)=(p,r)$. We have
\begin{eqnarray}
(g_{(x,t)}(U,V))^{\prime}_{(p,r)}(W)&=&\sum_{i,j}(a_{ij})^{%
\prime}_{(p,r)}(W)(U)_i(V)_j  \label{firstsidederivation}
\end{eqnarray}

In the other hand, if we denote a generic point $(x,t)$ by $v$
\begin{eqnarray*}
g_{\Psi(v)}(\Psi^\prime_{v}(U),\Psi^\prime_{v}(V))
=\sum_{i,j}a_{ij}(\Psi(v))(\Psi^\prime_{v}(U))_i(\Psi^\prime_{v}(V))_j
\end{eqnarray*}
a derivation gives
\begin{eqnarray}
&&\sum_{i,j}(a_{ij})^{\prime}_{\Psi(v)}(\Psi^\prime_{v}(W))(\Psi^%
\prime_{v}(U))_i(\Psi^\prime_{v}(V))_j
+\sum_{i,j}a_{ij}(\Psi(v))(\Psi^{\prime\prime}_{v}(U,W))_i(\Psi^%
\prime_{v}(V))_j  \label{secondsidederivation} \\
&&\text{\hspace{1.5 cm}}\space+\sum_{i,j}a_{ij}(\Psi(v))(\Psi^%
\prime_{v}(U))_i(\Psi^{\prime\prime}_{v}(V,W))_j  \notag
\end{eqnarray}
We have
\begin{equation*}
(\Psi^\prime_{v}(U))_i=(\phi^\prime_{v}(U))_i \text{ and }%
(\Psi^{\prime\prime}_{v}(U))_i=(\phi^{\prime\prime}_{v}(U))_i
\end{equation*}
using the triviality of the 1-jet of $\Psi$ we get
\begin{equation*}
\Psi(p,r)=(p,r),\Psi^\prime_{(p,r)}=Id
\end{equation*}
then (\ref{secondsidederivation}) becomes
\begin{eqnarray*}
&&\sum_{i,j}(a_{ij})^{\prime}_{(p,r)}(W)(U)_i(V)_j
+\sum_{i,j}a_{ij}(p,r)(\phi^{\prime\prime}_{(p,r)}(U,W))_i(V)_j
+\sum_{i,j}a_{ij}(p,r)(U)_i(\phi^{\prime\prime}_{(p,r)}(V))_j
\end{eqnarray*}

Therefore, the equality with (\ref{firstsidederivation}) gives
\begin{equation*}
\sum_{i,j}a_{ij}(p,r)(\phi^{\prime\prime}_{(p,r)}(U,W))_i(V)_j
+\sum_{i,j}a_{ij}(p,r)(U)_i(\phi^{\prime\prime}_{(p,r)}(V,W))_j=0
\end{equation*}
that is
\begin{equation*}
g_{(p,r)}(\phi^{\prime\prime}_{(p,r)}(U,W),V)
+g_{(p,r)}(U,\phi^{\prime\prime}_{(p,r)}(V,W))=0
\end{equation*}
By the Braid Lemma \ref{Braidlemma} we conclude that $\phi^{\prime\prime}_{(p,r)}=0$.
\end{proof}

\subsection{Step 2: the $\protect\phi$-part}

\textit{Assume the lightlike structure generic. If $\Psi=(\phi,\delta)$ is a
3-isometry at $(p,r)$ with a trivial 1-jet, then $\phi^{\prime\prime%
\prime}_{(p,r)}=0$. }

\bigskip \noindent

\begin{proof}
If $\Psi$ was a true isometry, then it acts, via $\phi$, on $M$ seen as the
quotient space of the characteristic foliation (in particular it does not
depend on $r$), and it preserves the GCS on it. The genericity hypothesis
allows one to apply Theorem \ref{Liouville} to conclude that $\phi^{\prime
\prime \prime}_{(p,r)} = 0$.

Now, we want to apply the same argument when $\Psi$ is merely isometric up
to order 3 at $(p, r) $ (and has a trivial 1-jet). The idea then is to show
that the diffeomorphism $x \to \varphi(x) = \phi(x, r) $ is a 3-isometry of
the GCS of $M$. The expected $k$-shift of $\varphi$ (see \ref{isometries})
will be nothing but $\delta$. In other words, we want $\varphi$ to satisfy
the following equation up to order 3 at $p$:
\begin{equation}  \label{equation.varphi}
g_{(x,r)}(U,V)=g_{(\varphi(x), \delta(x,
r))}(\varphi^\prime_{x}(U),\varphi^\prime_{x}(V))
\end{equation}

This property of $\varphi$, follows from the similar one of $\Phi$, that is,
it satisfies (\ref{lightlikeisometry}) up to order 3, and remembering that $%
\phi^{\prime \prime}_{(p, r)} = 0$ by the previous step. Indeed, let us
derive twice the equation satisfied by $\Psi$ at $\nu=(p,r)$:

\begin{equation*}
g_{(x,t)}(U,V)=g_{\Psi (x,t)}(\Psi _{(x,t)}^{\prime }(U),\Psi
_{(x,t)}^{\prime }(V))
\end{equation*}%
we get, for all $W_1, W_2 \in T_{(x, t} \mathcal V$,
\begin{eqnarray}
&&g_{\nu }^{W_{1},W_{2}}(U,V)=g_{\Psi (\nu )}^{\Psi _{\nu }^{\prime \prime
}(W_{1},W_{2})}(\Psi _{\nu }^{\prime }(U),\Psi _{\nu }^{\prime }(V))+g_{\Psi
(\nu )}^{\Psi _{\nu }^{\prime }(W_{1}),\Psi _{\nu }^{\prime }(W_{2})}(\Psi
_{\nu }^{\prime }(U),\Psi _{\nu }^{\prime }(V))  \label{3lightlikeisometry}
\\
&&\hspace{2cm}+g_{\Psi (\nu )}^{\Psi _{\nu }^{\prime }(W_{1})}(\Psi _{\nu
}^{\prime \prime }(U,W_{2}),\Psi _{\nu }^{\prime }(V))+g_{\Psi (\nu )}^{\Psi
_{\nu }^{\prime }(W_{1})}(\Psi _{\nu }^{\prime }(U),\Psi _{\nu }^{\prime
\prime }(V,W_{2}))  \notag \\
&&\hspace{2cm}+g_{\Psi (\nu )}^{\Psi _{\nu }^{\prime }(W_{2})}(\Psi _{\nu
}^{\prime \prime }(U,W_{1}),\Psi _{\nu }^{\prime }(V))+g_{\Psi (\nu )}^{\Psi
_{\nu }^{\prime }(W_{2})}(\Psi _{\nu }^{\prime }(U),\Psi _{\nu }^{\prime
\prime }(V,W_{1}))  \notag \\
&&\hspace{2cm}+g_{\Psi (\nu )}(\Psi _{\nu }^{\prime \prime }(U,W_{1}),\Psi
_{\nu }^{\prime \prime }(V,W_{2}))+g_{\Psi (\nu )}(\Psi _{\nu }^{\prime
\prime }(U,W_{2}),\Psi _{\nu }^{\prime \prime }(V,W_{1}))  \notag \\
&&\hspace{2cm}+g_{\Psi (\nu )}(\Psi _{\nu }^{\prime \prime \prime
}(U,W_{1},W_{2}),\Psi _{\nu }^{\prime }(V))+g_{\Psi (\nu )}(\Psi _{\nu
}^{\prime }(U),\Psi _{\nu }^{\prime \prime \prime }(V,W_{1},W_{2}))  \notag
\end{eqnarray}%
where
\begin{equation*}
g_{v}^{W}=\sum_{i,j}(a_{ij})_{v}^{\prime }(W)dx^{i}dx^{j}
\end{equation*}%
and
\begin{equation*}
g_{v}^{W_{1},W_{2}}=\sum_{i,j}(a_{ij})_{v}^{\prime \prime
}(W_{1},W_{2})dx^{i}dx^{j}
\end{equation*}%
But
\begin{equation*}
g_{v}(U,V)=g_{v}(U,V_{M})=g_{v}(U_{M},V)=g_{v}(U_{M},V_{M}),
\end{equation*}%
and the same thing for $g_{v}^{W}$ and $g_{v}^{W_{1},W_{2}}$, then (\ref%
{3lightlikeisometry}) becomes
\begin{eqnarray}
&&g_{\nu }^{W_{1},W_{2}}(U,V)=g_{\Psi (\nu )}^{\Psi _{\nu }^{\prime \prime
}(W_{1},W_{2})}(\Psi _{\nu }^{\prime }(U),\Psi _{\nu }^{\prime }(V))+g_{\Psi
(\nu )}^{\Psi _{\nu }^{\prime }(W_{1}),\Psi _{\nu }^{\prime }(W_{2})}(\Psi
_{\nu }^{\prime }(U),\Psi _{\nu }^{\prime }(V))
\label{3lightlikeisometryvarphi} \\
&&\hspace{2.1cm}+g_{\Psi (\nu )}^{\Psi _{\nu }^{\prime }(W_{1})}(\phi _{\nu
}^{\prime \prime }(U,W_{2}),\Psi _{\nu }^{\prime }(V))+g_{\Psi (\nu )}^{\Psi
_{\nu }^{\prime }(W_{1})}(\Psi _{\nu }^{\prime }(U),\phi _{\nu }^{\prime
\prime }(V,W_{2}))  \notag \\
&&\hspace{2.1cm}+g_{\Psi (\nu )}^{\Psi _{\nu }^{\prime }(W_{2})}(\phi _{\nu
}^{\prime \prime }(U,W_{1}),\Psi _{\nu }^{\prime }(V))+g_{\Psi (\nu )}^{\Psi
_{\nu }^{\prime }(W_{2})}(\Psi _{\nu }^{\prime }(U),\phi _{\nu }^{\prime
\prime }(V,W_{1}))  \notag \\
&&\hspace{2.1cm}+g_{\Psi (\nu )}(\phi _{\nu }^{\prime \prime }(U,W_{1}),\phi
_{\nu }^{\prime \prime }(V,W_{2}))+g_{\Psi (\nu )}(\phi _{\nu }^{\prime
\prime }(U,W_{2}),\phi _{\nu }^{\prime \prime }(V,W_{1}))  \notag \\
&&\hspace{2.1cm}+g_{\Psi (\nu )}(\phi _{\nu }^{\prime \prime \prime
}(U,W_{1},W_{2}),\Psi _{\nu }^{\prime }(V))+g_{\Psi (\nu )}(\Psi _{\nu
}^{\prime }(U),\phi _{\nu }^{\prime \prime \prime }(V,W_{1},W_{2}))  \notag
\end{eqnarray}

Since $\Psi $ has a trivial 1-jet at $\nu $, $\Psi (\nu )=(p,r)$, $\Psi
_{\nu }^{\prime }=Id$ and $\phi _{\nu }^{\prime \prime }=0$ (Step 1), and so
\begin{eqnarray}
g_{(p,r)}(\phi _{(p,r)}^{\prime \prime \prime }(U,W_{1},W_{2}),V)
&+&g_{(p,r)}(U,\phi _{(p,r)}^{\prime \prime \prime }(V,W_{1},W_{2}))
\label{generalizedBLfor lightlike0} \\
&=&-g_{(p,r)}^{\Psi _{(p,r)}^{\prime \prime }(W_{1},W_{2})}(U,V)  \notag
\end{eqnarray}%
But
\begin{eqnarray*}
g_{(p,r)}^{\Psi _{(p,r)}^{\prime \prime }(W_{1},W_{2})}(U,V)
&=&\sum_{i,j}(a_{ij})_{(p,r)}^{\prime }(\Psi _{(p,r)}^{\prime \prime
}(W_{1},W_{2}))(U)_{i}(V)_{j} \\
&=&\sum_{i,j}((\mathbf{D}a_{ij})_{(p,r)}(\phi _{(p,r)}^{\prime \prime
}(W_{1},W_{2}))+\delta _{(p,r)}^{\prime \prime
}(W_{1},W_{2})\partial
_{(p,r)}a_{ij})(U)_{i}(V)_{j} \\
&=&\sum_{i,j}\delta _{(p,r)}^{\prime \prime }(W_{1},W_{2})\partial
_{(p,r)}a_{ij}(U)_{i}(V)_{j} \\
&=&\delta _{(p,r)}^{\prime \prime
}(W_{1},W_{2})\partial_{(p,r)}g(U,V)
\end{eqnarray*}%
where%
\begin{equation*}
\partial_{(p,r)}g(U,V)=\sum_{i,j}\partial _{(p,r)}a_{ij}(U)_{i}(V)_{j}
\end{equation*}%
Thus (\ref{generalizedBLfor lightlike0}) gives
\begin{eqnarray}
g_{(p,r)}(\phi _{(p,r)}^{\prime \prime \prime }(U,W_{1},W_{2}),V)
&+&g_{(p,r)}(U,\phi _{(p,r)}^{\prime \prime \prime }(V,W_{1},W_{2}))
\label{generalizedBLforlightlike} \\
&=&-\delta _{(p,r)}^{\prime \prime
}(W_{1},W_{2})\partial_{(p,r)}g(U,V)  \notag
\end{eqnarray}%
Finally, apply the Generalized Braid Lemma to $J=g_{(p,r)},K=\delta
_{(p,r)}^{\prime \prime },J^{\prime }=-\partial_{(p,r)}g$ and
$A=\phi _{(p,r)}^{\prime \prime \prime }$ to get $\phi
_{(p,r)}^{\prime \prime \prime }=0$.
\end{proof}

\subsection{Step 3: the $\protect\delta$-part, end of proof of Theorem
\protect\ref{subrigid}}

\textit{Let $\mathcal{V}$ be a generic lightlike manifold. If
$\Psi=(\phi,\delta)$ is a 3-isometry at $(p,r)$ with a trivial 1-jet
at $(p,r)$ then $\Psi$ has a trivial 2-jet at $(p,r)$. }

\bigskip \noindent

\begin{proof}
By step 2 we have $\phi^{\prime\prime\prime}_{(p,r)}=0$, so (\ref%
{generalizedBLforlightlike}) gives
\begin{equation*}
\delta^{\prime\prime}_{(p,r)}
(W_{1},W_{2})\sum_{i,j}\partial_{(p,r)}a_{ij}(U)_i(V)_j=0
\end{equation*}
Remember that $g$ is nowhere transversally Riemannian, thus $%
\delta^{\prime\prime}_{(p,r)}=0$, and hence $\Psi$ has a trivial 2-jet.

\bigskip
Thus, Theorem \ref{subrigid} in the case $d=1$, that is
(3,1)-subrigidity of generic lightlike metrics, is fully proved.

\subsection{Proof in the case $d>1$}
\label{higher_degree}

  If
$\Psi =(\phi ,\delta )$ is isometric up to order $d+2$ at a point
$\left( p,r\right) \in \mathcal V$, then the equality of
(\ref{lightlikeisometry}) holds for the derivatives of order $d+1$
at $(x,t)=(p,r)$. A derivation of order $d$ of the left side gives
\begin{equation}
(g_{(x,t)}(U,V))_{(p,r)}^{(d)}(W_{1},W_{2},...,W_{d})=%
\sum_{i,j}(a_{ij})_{(p,r)}^{(d)}(W_{1},W_{2},...,W_{d})(U)_i(V)_j
\label{d derivation}
\end{equation}
for all $(W_{1},W_{2},...,W_{d})\in T_{(p,r)}\mathcal V$.  On  the
other hand, taking derivation of  the right side of
(\ref{lightlikeisometry}) at $(p,r)$, and using the fact that $\Psi
$ has a trivial $d$-jet at $(p,r)$, it remains
\begin{eqnarray*}
&&\sum_{i,j}(a_{ij})_{(p,r)}^{(d)}(W_{1},W_{2},...,W_{d})(U)_{i}(V)_{j}+%
\sum_{i,j}a_{ij}(p,r)(\phi
_{(p,r)}^{(d+1)}(U,W_{1},W_{2},...,W_{d}))_{i}(V)_{j}  \\&&+%
\sum_{i,j}a_{ij}(p,r)(U)_{i}(\phi
_{(p,r)}^{(d+1)}(V, W_{1},W_{2},...,W_{d}))_{j}
\end{eqnarray*}

Comparing with   (\ref{d derivation}), we get,
\begin{equation*}
\sum_{i,j}a_{ij}(p,r)(\phi
_{(p,r)}^{(d+1)}(U,W_{1},W_{2},...,W_{d}))_{i}(V)_{j}+%
\sum_{i,j}a_{ij}(p,r)(U)_{i}(\phi
_{(p,r)}^{(d+1)}(V, W_{1},W_{2},...,W_{d}))_{j}=0
\end{equation*}%
that is,
\begin{equation*}
g_{(p,r)}(\phi _{(p,r)}^{(d+1)}(U,W_{1},W_{2},...,W_{d}),V)+g_{(p,r)}(U,\phi
_{(p,r)}^{(d+1)}(V, W_{1},W_{2},...,W_{d}))=0
\end{equation*}%
By the Braid Lemma  \ref{Braidlemma} we conclude that $\phi _{(p,r)}^{(d+1)}=0$.

Now, if we derive (\ref{lightlikeisometry}) $(d+1)$-times at $(p,r)$, we get
for the left side%
\begin{equation}
\sum_{i,j}(a_{ij})_{(p,r)}^{(d+1)}(W_{1},W_{2},...,W_{d+1})(U)_{i}(V)_{j}
\label{d+1 derivation}
\end{equation}%
and for the right one%
\begin{eqnarray*}
&&\sum_{i,j}(a_{ij})_{(p,r)}^{(d+1)}(W_{1},W_{2},...,W_{d+1})(U)_{i}(V)_{j}+%
\sum_{i,j}(a_{ij})_{(p,r)}^{\prime }(\Psi
_{(p,r)}^{(d+1)}(W_{1},W_{2},...,W_{d+1}))(U)_{i}(V)_{j} \\
&&+\sum_{i,j}a_{ij}(p,r)(\Psi
_{(p,r)}^{(d+2)}(U,W_{1},...,W_{d+1}))_{i}(V)_{j}
\\
&&+\sum_{i,j}a_{ij}(p,r)(U)_{i}(\Psi
_{(p,r)}^{(d+2)}(V,W_{1},...,W_{d+1}))_{j}
\end{eqnarray*}%
for any $W_{1},W_{2},...,W_{d+1}\in T_{(p,r)} \mathcal V$, since
$\Psi $ has a trivial $d$-jet at $(p,r)$ and ${\phi
_{(p,r)}^{(d+1)}=0.}$  Writing equality between the two sides gives,
\begin{eqnarray*}
&&\sum_{i,j}a_{ij}(p,r)(\phi
_{(p,r)}^{(d+2)}(U,W_{1},...,W_{d+1}))_{i}(V)_{j}+\sum_{i,j}a_{ij}(p,r)(U)_{i}(%
\phi _{(p,r)}^{(d+2)}(V,W_{1},...,W_{d+1}))_{j} \\
&&=-\sum_{i,j}(a_{ij})_{(p,r)}^{\prime }(\Psi
_{(p,r)}^{(d+1)}(W_{1},W_{2},...,W_{d+1}))(U)_{i}(V)_{j}
\end{eqnarray*}%
that is
\begin{eqnarray}
g_{(p,r)}(\phi _{(p,r)}^{(d+2)}(U,W_{1},...,W_{d+1}),V) &+&g_{(p,r)}(U,\phi
_{(p,r)}^{(d+2)}(V,W_{1},...,W_{d+1}))  \label{generalizedBLfor lightlike d}
\\
&=&-g_{(p,r)}^{\Psi _{(p,r)}^{(d+1)}(W_{1},...,W_{d+1})}(U,V)  \notag
\end{eqnarray}%
But
\begin{eqnarray*}
g_{(p,r)}^{\Psi _{(p,r)}^{(d+1)}(W_{1},...,W_{d+1})}(U,V)
&=&\sum_{i,j}(a_{ij})_{(p,r)}^{\prime }(\Psi
_{(p,r)}^{(d+1)}(W_{1},...,W_{d+1}))(U)_{i}(V)_{j} \\
&=&\sum_{i,j}(\mathbf{D}a_{ij})_{(p,r)}(\phi
_{(p,r)}^{(d+1)}(W_{1},...,W_{d+1})) (U)_{i}(V)_{j} \\
&&+\sum_{i,j}\delta
_{(p,r)}^{(d+1)}(W_{1},...,W_{d+1})\partial _{(p,r)}a_{ij}(U)_{i}(V)_{j} \\
&=&\sum_{i,j}\delta _{(p,r)}^{(d+1)}(W_{1},...,W_{d+1})\partial
_{(p,r)}a_{ij}(U)_{i}(V)_{j} \\
&=&\delta _{(p,r)}^{(d+1)}(W_{1},...,W_{d+1})\partial _{(p,r)}g(U,V)
\end{eqnarray*}%
Thus (\ref{generalizedBLfor lightlike d}) becomes
\begin{eqnarray}
&&g_{(p,r)}(\phi _{(p,r)}^{(d+2)}(U,W_{1},...,W_{d+1}),V)+g_{(p,r)}(U,\phi
_{(p,r)}^{(d+2)}(V,W_{1},...,W_{d+1}))  \label{generalizedBLforlightlike} \\
&=&-\delta _{(p,r)}^{(d+1)}(W_{1},...,W_{d+1})\partial _{(p,r)}g(U,V)  \notag
\end{eqnarray}%
Applying the Generalized Braid Lemma \ref{GeneralizedBraid} to
$J=g_{(p,r)},K=\delta
_{(p,r)}^{(d+1)}(.,.,W_{3},...,W_{d+1})$, $J^{\prime }=-\partial _{(p,r)}g$ and $%
A=\phi _{(p,r)}^{(d+2) }(.,.,.,W_{3},...,W_{d+1})$,  we get $\phi
_{(p,r)}^{(d+2)}=0$ and
\begin{equation*}
-\delta _{(p,r)}^{(d+1)}(W_{1},...,W_{d+1})\partial _{(p,r)}g(U,V)=0
\end{equation*}%
which means that $\delta _{(p,r)}^{(d+1)}=0$ since $g$ is nowhere
transversally Riemannian. Therefore $\Psi $ has a trivial $(d+1)$-jet. This
completes the proof of Theorem \ref{subrigid}.

\end{proof}

\end{document}